\newtheorem{thm}{Theorem}[section]
\newtheorem{cor}[thm]{Corollary}
\newtheorem{lem}[thm]{Lemma}
\newtheorem{prop}[thm]{Proposition}
\theoremstyle{definition}
\newtheorem{defn}[thm]{Definition}
\theoremstyle{remark}
\newtheorem{rem}[thm]{Remark}
\numberwithin{equation}{section}
\newcommand{\To}{\longrightarrow}
\newcommand{\inv}{^{-1}}
\newcommand{\C}{\mathbb C}
\newcommand{\Z}{\mathbb Z}
\newcommand{\R}{\mathbb R}
\newcommand{\x}{\times}
\newcommand{\pr}{\operatorname{pr}}
 \def \underbracket { %
 \@ifnextchar[{\@underbracket}{\@underbracket[\@bracketheight]}%
 } 
 \def\@underbracket[#1]{ %
 \@ifnextchar[{\@under@bracket[#1]}{\@under@bracket[#1][0.4em]} %
 } 
\def\@under@bracket[#1][#2]#3{
\mathop{\vtop{\m@th\ialign{##\crcr$\hfil\displaystyle{#3}\hfil$ %
\crcr\noalign{\kern3\p@\nointerlineskip}\upbracketfill
{#1}{#2} 
\crcr\noalign{\kern3\p@}}}}\limits} 
\def\upbracketfill#1#2{$\m@th\setbox\z@\hbox{$\braceld$} 
\edef\@bracketheight{\the\ht\z@}\bracketend{#1}{#2} 
\leaders\vrule\@height#1\@depth\z@\hfill
\leaders\vrule\@height#1\@depth\z@\hfill 
\bracketend{#1}{#2}$} 
\def\bracketend#1#2{\vrule height#2 width#1\relax} 
\begin{document}

\title
{A mirror symmetric solution to the quantum Toda lattice}
\author{Konstanze Rietsch}%
\address{King's College London, UK}%
\email{konstanze.rietsch@kcl.ac.uk}%

\keywords{flag varieties, quantum cohomology, mirror symmetry}%

\thanks{
The author is supported by EPSRC advanced fellowship
EP/S071395/1.}

\subjclass[2000]{14N35, 14M17, 57T15} \keywords{Flag varieties,
quantum cohomology, mirror symmetry}

\date{March 22, 2011}
\begin{abstract}
We give a representation-theoretic proof of
a conjecture from \cite{Rie:MSgen} providing
integral formulas for solutions to the quantum Toda lattice 
in general type. 
This result generalizes work of Givental for $SL_n/B$
in a uniform way to arbitrary type,
and can be interpreted as a kind of mirror theorem for the full flag 
variety $G/B$. We also 
prove the existence of a totally positive and totally 
negative critical point of the
`superpotential' in every mirror fiber.  
\end{abstract}
\maketitle
\section{Introduction}
\subsection{}
In \cite{Rie:MSgen} we introduced a conjectural
`mirror datum' $(Z_P, \omega,\mathcal F_P)$
associated to a general flag variety $G/P$.
The main goal of this paper is to show, in the full 
flag variety case, that associated integrals
\begin{equation}\label{e:S}
S_\Gamma(h)=\int_{[\Gamma_h]}  
e^{\mathcal F_B}\ \omega_h , 
\end{equation}
defined in terms of the mirror datum 
$(Z=Z_B, \omega,\mathcal F_B)$ of $G/B$,
are annihilated by the quantum Toda 
Hamiltonian associated to $G^\vee$~\cite{Kos:qToda}. 
Here $h\in\operatorname{ Lie}(T^\vee)$ for a maximal torus $T^\vee$ of $G^\vee$.

To give an idea of the definitions,
after trivializing the mirror family $Z=(Z_h)_h$
the form $\omega_h$ in the formula above
can be understood as a particular torus-invariant 
rational section $\omega$ of the canonical bundle of the Langlands dual flag variety 
$G^\vee/B^\vee$. The `mirror fiber' $Z_h$,
inside of which $[\Gamma_h]$ is a middle dimensional cycle,
may be identified with the affine subvariety of $G^\vee/B^\vee$ 
where $\omega$ is regular. (Explicitly, it is an intersection 
of opposite big cells, often denoted $\mathcal R^\vee_{1,w_0}$). The definition of 
the superpotential $\mathcal F_B$ comes down to writing an element of 
$\mathcal R^\vee_{1,w_0}$ from two points of view, 
with an additional twist by $e^h$, and adding
up the `coordinates'. For example for $G=SL_2$ 
we have 
$\mathcal R^\vee_{1,w_0}=\C P^1\setminus \{0,\infty\}$, 
and the point with
coordinate $u$ when viewed from the direction of $0$, say, has 
coordinate  $e^h/u$ (after twisting)
when viewed from $\infty$. This 
recovers the familiar formula  $u+\frac{e^h}u$
for the superpotential of $\C P^1$ (eg. \cite{
Giv:ICM, EY:94}).

For more detailed definitions of what is involved in 
defining the integrals \eqref{e:S} in general see Section~6 (for the mirror family 
$Z_B$ and superpotential $\mathcal F_B$),
Section~5 (for the holomorphic $n$-form $\omega_h$),
and Section~7 for a discussion of the integration 
cycles $[\Gamma_h]$.

\subsection{}
We recall that by work of B.~Kim \cite{Kim:qCohGB}, 
the quantum Toda equations of $G^\vee$ are  
the `quantum differential equations
 of $G/B$' in the sense of Givental, whose symbols 
recover the relations in the quantum cohomology 
ring. See also \cite{Braverman:InstantonCounting}.
In  \cite{Rie:MSgen} we showed how the mirror datum 
$(Z_P, \omega,\mathcal F_P)$
can be used to recover the 
quantum cohomology ring $qH^*(G/P)_{(q)}$ with 
quantum parameters inverted, in its presentation
due to Dale Peterson.  
Therefore the main result in the present paper can be 
seen as a kind of `quantization' of a result from \cite{Rie:MSgen}. 
Moreover, our main result here is a special case
of Conjecture~8.2 in \cite{Rie:MSgen}.

\subsection{}
In type $A$, integral solutions of the form \eqref{e:S} to the quantum 
Toda lattice were obtained by Givental
\cite{Giv:MSFlag} using ingenious and very explicit coordinates, in 
what he called a `mirror theorem' for $SL_n/B$.
The general mirror family introduced in \cite{Rie:MSgen} 
was very much inspired by Givental's construction, but
is in contrast given in Lie theoretic terms and without preferred choice 
of coordinates.

Givental's construction of solutions to the 
quantum Toda lattice was  also
recently revisited by 
Gerasimov, Kharchev, Lebedev and Oblezin 
\cite{GKLO:GaussGivental} (GKLO),
who gave a new proof of Givental's type $A$  
result using Kostant's Whittaker model. 
Their proof has some  features in common with 
our construction \cite{Rie:MSgen} but still relied 
in an essential way on the use of 
Givental's special coordinates.  This work is
partly inspired by ideas from \cite{GKLO:GaussGivental}.

Integral solutions to the quantum Toda
lattice different from the ones in the present
paper and much background information
can be found in the nice 
book chapter of Semenov-Tian-Shansky
\cite{STS:bookchapter}.
 Comparison with another work of 
Gerasimov, Lebedev and Oblezin 
\cite{GLO:NewIntRep} is given in 
Section~\ref{s:GLO}.

\subsection{}
The larger part of this paper is concerned with the
proof the main theorem, Theorem~\ref{t:main}, 
which says that the integrals \eqref{e:S} with 
our Lie theoretic mirror datum provide solutions
to the quantum Toda lattice in general type. 
Kostant's classical result about the construction 
of solutions to the quantum Toda lattice using 
Whittaker modules is central to this work and
is recalled in Section~\ref{s:qToda}. Namely, a 
key step in our proof is the construction of two 
particular dually paired Whittaker modules 
with zero infinitesimal character, defined 
on a space of functions on the mirror, and
the computation of their Whittaker 
vectors, see Sections~9 and 10.

The comparison of the superpotential $\mathcal F_B$
in our integral formula with the original one introduced by 
Givental in type A is explained in detail in the
precursor to this paper \cite[Section~9]{Rie:MSgen}, where the
Lie theoretic superpotential we use here was 
originally introduced.
It is simply the restriction to a particular coordinate 
patch isomorphic to $(\C^*)^N$ (followed by passing
to logarithmic coordinates).

In Section~\ref{s:contours} we discuss how 
we want to choose the $[\Gamma_h]$ in \eqref{e:S}.
We also describe explicitly two distinguished such choices 
of families of integration cycles, for which the 
corresponding integrals can be made canonical
in a sense explained in Section~\ref{s:mainthm}. 
The one family of cycles 
can be thought of as running through the
`totally negative parts' of the mirror fibers
and is made up of non-compact cycles.
The other  family consists of cycles 
that are compact.  

Additionally, in Section 11, we study the
critical points of $\mathcal F_B$, and prove the
existence of a totally positive and a totally
negative critical point (at any fixed positive
value of the quantum parameters). This 
partially generalizes an earlier result in type $A$, see 
\cite{Rie:TotPosGBCKS}.

\vskip .2cm

\subsection{}\label{s:GLO}
After submission of this paper I found out that a
further paper of Gerasimov, Lebedev and Oblezin 
on solutions to the quantum Toda lattice for Lie groups
of classical types covers similar ground. Their work 
\cite{GLO:NewIntRep} was done at a similar time 
and appeared as preprint while the preprint version 
\cite{Rie:QToda} was just being prepared.

While we consider the zero infinitesimal character case, 
the paper
\cite{GLO:NewIntRep} 
lets the infinitesimal character vary to obtain eigenfunctions, 
not just zeros, of the classical type quantum Toda lattice. 
I believe it is straightforward to check that their 
formula \cite[Proposition 2.1]{GLO:NewIntRep} for these 
eigenfunctions agrees with one conjectured earlier in 
\cite[Conjecture~8.2]{Rie:MSgen}, 
and therefore their results overlap with results 
in the present paper. The overlap being the classical type case 
formula for zeros of the quantum Toda lattice.

Despite this overlap in content our present paper and
\cite{GLO:NewIntRep} are 
quite dissimilar papers in some ways. For example I 
consider a different setting for the integration cycles, 
as well as making sure everything works in general type 
and is expressed in the (to me) most natural way in 
terms of Lie theory. I also prove existence of totally
positive and totally negative critical points of the
superpotential. On the other hand Gerasimov, Lebedev
and Oblezin
give the more general eigenfunctions in their setting, 
and go on to choose clever coordinates and express 
their integrands explicitly in all classical types. 
This gives a very nice additional result, which also
generalizes Givental's work more directly.

While reading the paper \cite{GLO:NewIntRep} I
noticed that the classical type assumption in 
\cite[Proposition 2.1]{GLO:NewIntRep} 
is used in a part of the proof dealing 
with holomorphic volume 
forms \cite[Lemma~3.1]{GLO:NewIntRep}, 
and possibly nowhere else. 
However the relevant lemma on holomorphic
volume forms was already 
carefully stated and proved in general 
type in \cite[Section~7]{Rie:MSgen}. 
Possibly, therefore, combining the work in \cite{GLO:NewIntRep}
with this earlier result, 
one may be able to infer our conjecture 
 for eigenfunctions of the quantum 
differential equations \cite[Conjecture~8.2]{Rie:MSgen}
also for the exceptional groups.
  
\subsection{} From now on we swap the roles of 
 $G$ and its Langlands dual 
 compared to \cite{Rie:MSgen}, as the 
 `$A$-model' will not enter into the picture 
 much anymore. So the goal is to construct solutions 
 to the quantum Toda lattice for $G$.

\vskip .5cm

\noindent{\bf Acknowledgments.} 
I thank Ian Grojnowski for his hospitality
in 2006/07 when I was his visitor at the 
University of Cambridge and preparing this work. 
I am grateful to Dale Peterson for his inspirational
lectures \cite{Pet:QCoh}. I also thank Tom Coates for 
showing me the paper of F.~Pham. 
Finally, I thank one 
anonymous referee for asking me more
questions about the integration cycles, and another 
for prodding me to look at the related work of 
Gerasimov, Lebedev and Oblezin.  
\section{Notation and Preliminaries}

We refer to \cite{Springer:book,Knapp:LieGroupBook} for
background on algebraic groups and basic representation theory.
Let $G$ be a simple, simply connected algebraic group over $\C$
of rank $n$ with split real form.  
We fix opposite Borel subgroups $B=B_-$ and $B_+$ with unipotent radicals 
$U_-$ and $U_+$, respectively. Assume that $B_+$ and $B_-$ are also 
defined over $\R$ and the maximal torus $T=B_+\cap B_-$ is split. Let
$W=N_G(T)/T$ denote the Weyl group.

Let $\mathfrak g$ be the Lie algebra of $G$. We denote by  
$\mathfrak b_-,\mathfrak b_+,\mathfrak u_-,\mathfrak u_+, 
\mathfrak h$ the Lie algebras of $B_-,B_+,U_-,U_+$ and $T$, respectively.
Let $\mathfrak h_\R$ be the real form of $\mathfrak h$ and $\mathfrak g_\R$
the real form of $\mathfrak g$.

We set $I=\{1,\dotsc,n \}$ and choose  $\{\alpha_i \ | \ i\in I\}$ to be the set of 
simple roots associated to the positive 
Borel $B_+$. We may view the $\alpha_i$ as elements of $\mathfrak h^*$
or as characters on $T$. The coroot associated to $\alpha_i$ is  
denoted $\alpha_i^\vee$ and gives a 
1-parameter subgroup of $T$ denoted $s\mapsto s^{\alpha_i^\vee}$. 

Corresponding to the positive and negative simple roots we have 
Chevalley generators $e_i, f_i\in \mathfrak g$
and one parameter subgroups, 
 \begin{align*}
x_i(t):=\exp(t e_i), &\qquad y_i(t):=\exp(t f_i), \qquad {t\in \C},
 \end{align*}
in $U_+$ and $U_-$, respectively. Let 
\begin{equation}\label{e:Weyl}
\dot s_i=x_i(-1)y_i(1)x_i(-1).
\end{equation}
This element represents a simple reflection $s_i$ in the Weyl group $W$.

For $w\in W$, a representative
$\dot w\in G$ is defined by $\dot w=\dot s_{i_1}\dot
s_{i_2}\cdots \dot s_{i_m}$, where $ s_{i_1} s_{i_2}\cdots
s_{i_m}$ is a (any) reduced expression for $w$. The length $m$ of
a reduced expression for $w$ is denoted by $\ell(w)$.

Let $\left<\ ,\ \right >$ be the $W$-invariant inner product 
on $\mathfrak h^*$ such that $\left < \alpha,\alpha\right>=2$ 
for any long root $\alpha$. We also denote the corresponding 
inner product on $\mathfrak h$ in the same way, by $\left<\ ,\ \right>$. 

Let $\mathcal U(\mathfrak g)$ be the universal enveloping algebra of $\mathfrak g$, 
and $\mathcal Z(\mathfrak g)$ its center, and let 
$$
\gamma:\mathcal Z(\mathfrak g)\overset\sim\To \mathbb C[\mathfrak h^*]^{W},
$$ 
be the  Harish-Chandra homomorphism to the ring of $W$-invariant
regular functions on $\mathfrak h^*$.
Define $c_2\in\mathcal Z(\mathfrak g)$ by 
$\gamma(c_2)(a)=\left<a,a\right >$ for $a\in\mathfrak h^*$.

For any integral dominant weight $\lambda$ we have an irreducible 
representation $V(\lambda)$ of $G$. In each $V(\lambda)$ 
let us fix a highest weight vector $v^+_{\lambda} $.
Then for 
any $v\in V(\lambda)$ and extremal weight vector $\dot w\cdot v^+_{\lambda}$ 
we have the coefficient $\left<v,\dot w\cdot v^+_{\lambda}\right>\in \C$ defined by
\begin{equation*}
v=\left<v,\dot w\cdot v^+_{\lambda}\right>\dot w\cdot  v^+_{\lambda}+
 \text{other weight space summands.}
\end{equation*}
We define $v^-_{\lambda}:=\dot w_0\cdot v^+_{\lambda}$.
The most important
choices for $\lambda$ are the fundamental weights $\omega_i$, where $i\in I$, 
and $\rho:=\sum_{i\in I}\omega_i$.

Let $\mathcal B$ be the set of Borel subgroups in $G$ with the conjugation action,
$$
g\cdot B:=g B g\inv \qquad g\in G,\ B\in \mathcal B.
$$
Then we may identify $\mathcal B$ with the flag variety $G/B_-$ in the usual
way, by sending $gB_- $ to $ g\cdot B_-$.
Consider the Zariski open subset of $G/B_-$ given by the intersection of two 
opposed big Bruhat cells
\begin{equation*}
\mathcal R_{1,w_0}:=(B_+ B_-
\cap B_-\dot w_0 B_-)/B_-  ,
\end{equation*}
which can also be written more symmetrically as
$$
\mathcal R_{1,w_0}=B_+\cdot B_-\cap B_-\cdot B_+.
$$
We note that $\mathcal R_{1,w_0}$ is the complement of 
an anti-canonical divisor (namely, explicitly, the divisor defined
by the inverse to the section $\omega$ of
the {\it canonical} bundle given
in Proposition~\ref{p:omega}).

\section{The Toda lattice}\label{s:Toda}

The classical Toda lattice is an integrable system with Hamiltonian
$$
H(x_i, p_i)=\frac 1 2\sum p_i^2 +\sum e^{x_i-x_{i+1}},
$$
which was solved by Moser \cite{Moser:Toda} in the 1970's.
This integrable system was generalized to arbitrary $G$ by Kostant \cite{Kos:Toda},
using phase space $T^*(T_{\operatorname{ad}})\hat=T_{\operatorname{ad}}\x\mathfrak h^*$, the cotangent bundle 
to the adjoint torus $T_{\operatorname{ad}}$, and Hamiltonian
$$
H(t,h^*)=\frac 1 2\left<h^*,h^*\right> + \sum_i\alpha_i(t),
$$
where the simple root $\alpha_i$ is understood as a character
 on $T_{\operatorname{ad}}$. 
Kostant then solved
this system using a carefully chosen  embedding of the phase space into the
dual space $\mathfrak g^*$ of $\mathfrak g$, given by
\begin{equation}\label{e:phasespace}
(t,h^*)\mapsto F + h^* +\sum_{i\in I} {\alpha_i}(t) {f_i}^*. 
\end{equation}
Here ${e_i}^*,{f_i}^*\in \mathfrak g^*$ are defined to take value $1$ on 
$e_i$ and $f_i$, respectively, and vanish on all other 
weight spaces of $\mathfrak g$. Also $F=\sum e_i^*$. Note that
$\mathfrak h^*, \mathfrak b_-^*$ are subspaces of 
$\mathfrak g^*$. 
The image of the phase space is in fact the translate by $F$
of a coadjoint orbit in $\mathfrak b_-^*$. Moreover, the 
Toda Hamiltonian now appears naturally as the restriction 
of (essentially) the Killing form, and 
a full set of Poisson-commuting constants of motion comes 
from restricting the remaining generators of $\C[\mathfrak g^*]^G$.

\subsection{The (Givental-)Kim  presentation of $qH^*(G^\vee/B^\vee)$}
In \cite{Kim:qCohGB}  Kim described the relations of the 
small quantum cohomology
ring of $G^\vee/B^\vee$ in terms of constants of motion of the Toda lattice
associated to $G$, generalizing his joint work with Givental \cite{GiKi:FlTod}
in type~A.  Formally, this goes as follows.
Let
$\mathcal A^*$ denote the image of the embedding~\eqref{e:phasespace},  
$$
\mathcal  A^*= F+\mathfrak h^*+\sum(\C\setminus\{ 0\}) \ {f_i}^*,
$$
and let $\phi_1,\dotsc, \phi_n$ be a  set of homogeneous generators for 
$\C[\mathfrak h^*]^W$, following Chevalley.
Consider the `diagonalization' map 
$$
\Sigma:\mathcal A^* \to \mathfrak h^*/W.
$$
That is, $\Sigma$ corresponds to 
$$
\C[\mathfrak h^*]^W \overset\sim\to\C[\mathfrak g^*]^{G}\hookrightarrow \C[\mathfrak g^*]\to \C[\mathcal A^*],
$$
where the first map is given by Chevalley's restriction theorem, and the 
third map comes from the inclusion $\mathcal A^*\hookrightarrow\mathfrak g^*$.
Then for the ring $qH^*(G^\vee/B^\vee)[q_1\inv,\dotsc, q_{n}\inv]$ 
with quantum parameters inverted, Kim's presentation takes the form 
of an isomorphism 
$$
qH^*(G^\vee/B^\vee,\C)[q_1\inv,\dotsc, q_n\inv]\overset\sim\to  \C\left [\mathcal A^*\underset{\mathfrak h^*/W}\x \{0\}\right ]=\C[\mathcal A^*]/(\Sigma_1,\dotsc, \Sigma_n),
$$
where the $\Sigma_i:=\Sigma^*(\phi_i)$ are precisely Kostant's constants of motion
for the Toda lattice. In this presentation the usual 
Chern class generators $x_i$ of the quantum cohomology ring appear via the isomorphism
$H^2(G^\vee/B^\vee)\cong (\mathfrak h^\vee)^*$ and 
the identification of $(\mathfrak h^\vee)^*$ with 
functions on the $\mathfrak h^*$-part of $\mathcal A^*$.  Explicitly, the
$x_i$ and $q_i$ are identified with the coordinates on 
the degenerate leaf in $\mathcal A^*$ via
$$
h^*=-\sum_{i\in I}(x_1+\dotsc+ x_i)\alpha_i,\qquad t=\prod_{i\in I}  (-q_i)^{\omega_i^\vee},
$$  
where $h^*$ and $t$ are as in \eqref{e:phasespace}. 

\subsection{Peterson's presentation of $qH^*(G^\vee/B^\vee)$}
The mirror symmetric approach to the quantum cohomology rings \cite{Rie:MSgen} is more
closely related to an alternative presentation of $qH^*(G^\vee/B^\vee)$ due to Dale 
Peterson \cite{Pet:QCoh}, see also \cite{Kos:QCoh,Kos:QVandermonde,
Rie:QCohPFl, Rie:JAMSerr,Rie:MSgen}. 
Namely consider the closed subvariety $Y$ in $G/B_-$ defined by
$$
Y=\left \{gB_-\ |\ (g\inv\cdot F)|_{[\mathfrak u_-,\mathfrak u_-]}=0\right \}, 
$$
which is called the Peterson variety. 
Then in Peterson's presentation the quantum cohomology ring appears
as ring of regular functions on an open stratum of $Y$. Explicitly, in the
case where we localize at the quantum parameters,
$$
qH^*(G^\vee/B^\vee,\C)[q_1\inv,\dotsc, q_{n}\inv]\cong\C\left[\ Y\underset{\ G/B_-}\x \mathcal R_{1,w_0}\right].
$$
This presentation relates to Kim's presentation by the map taking 
$uB_-\mapsto u\inv\cdot F$ for $u\in U_+\cap B_-\dot w_0 B_-$,
which defines a morphism
$$ 
Y\underset{\ G/B_-}\x \mathcal R_{1,w_0}\to  \mathcal A^*\underset{\mathfrak h^*/W}\x\{0\}.
$$
That this is an isomorphism follows from work of Kostant, see~\cite{Kos:PolReps}.

Peterson's presentation via his variety $Y$ has some surprising advantages over Kim's presentation. Namely, the Peterson variety also sees the quantum cohomology rings of partial flag 
varieties $G^\vee/P^\vee$, by intersection with associated smaller Bruhat cells, and it provides presentations over the integers in general type \cite{Pet:QCoh}.

\section{The Quantum Toda lattice}\label{s:qToda}
The quantum Toda Hamiltonian we are interested in is the differential operator on $\mathfrak h$ defined by
\begin{equation}\label{e:QTodaHamiltonian}
\mathcal H_G=\frac 1 2  \Delta\ -\frac 1{z^2}
\sum_{i\in I} e^{\alpha_i},
\end{equation}
where $\Delta$ is the Laplace operator associated to the $W$-invariant inner product 
$\left <\ ,\ \right>$ on $\mathfrak h$, and the 
 $\alpha_i$ are the simple roots inside $\mathfrak h^*$.
This can be understood as a quantization via the orbit method (for a subgroup of $B_-$),
which gives rise to Kostant's `Whittaker model' for the centre of the universal enveloping
algebra inside $\mathcal U(\mathfrak b_-)$. The quantum Toda lattice was
constructed, analysed and solved by Kostant in \cite{Kos:qToda, Kos:Whittaker}. 

As Kazhdan and Kostant observed \cite{Etingof:QWhittaker}, 
the operator $\mathcal H_G$ can be obtained
from the Laplace operator on $G$ itself by restricting to appropriate `Whittaker functions'. 
This approach is the analogue of the embedding of $T^*(T_{\operatorname {ad}})$ into a `tridiagonal' part 
of $\mathfrak g^*$ in order to realize the classical Toda Hamiltonian as coming from 
an invariant quadratic form.
The higher Casimirs generalizing the Laplace operator provide quantum integrals of motion, 
in analogy with the Poisson commuting constants of motion obtained in the classical case as generators of
the ring of invariants (the direct translation from quantum integrals to classical integrals of motion being given by the Harish-Chandra homomorphism). 

In Section~\ref{s:Whittaker} we will review and adapt to our situation this construction 
of the quantum Toda lattice, as
Whittaker functions will turn out to appear very naturally in the mirror model. 
The main goal of the present paper is to connect Kostant's representation theoretic approach 
to the quantum Toda lattice with the general type $B$-model of $G^\vee/B^\vee$, and use it to prove 
the mirror conjecture from \cite{Rie:MSgen} for full flag varieties. 

\subsection{Whittaker functions and the quantum Toda Hamiltonian}\label{s:Whittaker}
Let $\chi_+:\mathfrak u_+\to\C$ and $\chi_-:\mathfrak u_-\to \C$ be Lie algebra homomorphisms. 
Then $\chi_+$ and $\chi_-$ are determined by $\chi_+(e_i)=:\chi_+^{(i)}$, and $\chi_-(f_i)=:\chi_-^{(i)}$,
respectively. Namely,
\begin{eqnarray*}
\chi_+&=&\sum_{i\in I}\chi_+^{(i)}e_i^*,\\
\chi_-&=&\sum_{i\in I}\chi_-^{(i)}f_i^*.
\end{eqnarray*}
By abuse of notation we denote by $e_i^*$ also the map $U_+\to\C$
which takes $u$ to the coefficient of $e_i$ in the series expansion
$u=\exp(n)=1+\sum e_i^*(n)e_i + \dotsc $, and similarly for $f_i^*$.    
The holomorphic characters of $U_+$ and $U_-$, respectively, 
corresponding to $\chi_+$ and $\chi_-$ are denoted by
\begin{eqnarray*}
e^{\chi_+}:&U_+\to \C^*, \\
e^{\chi_-}:&U_-\to \C^*. 
\end{eqnarray*}
If for all $i$ the coefficient $\chi_+^{(i)}\ne 0$, then  $e^{\chi_+}$ and also $\chi_+$ are 
called non-degenerate. Analogously for $\chi_-$.

For our purposes the `Whittaker functions' will not be functions on $G$ but 
functions on a universal cover of the open dense subset,
$U_+TU_-$ of $G$. That is, they are defined on
$$
X:=U_+TU_-\underset{T}\x \mathfrak h. 
$$  
More generally we will be interested in subsets of $X$ of the form
$$
X_{\mathcal O}:=U_+ T U_-\underset{T}\x \mathcal O,
$$ 
where  $\mathcal O$ is a connected open subset in $\mathfrak h$ or $\mathfrak h_{\R}$.
We call a smooth function $f: X_{\mathcal O}\to \C$ a {\it Whittaker function} with respect to $\chi_+$ and 
$\chi_-$ if 
\begin{equation}\label{e:WhittakerCondition}
f(u_+e^h u_-,h)=e^{\chi_+(u_+)}f(e^h,h) e^{\chi_-(u_-)}.
\end{equation}
Clearly $f$ on $X_\mathcal O$ is completely determined by its restriction to the Cartan component, and 
conversely any smooth function on  $\mathcal O$ gives rise to a Whittaker function on $X_{\mathcal O}$.

Consider the representation of $\mathcal U(\mathfrak g)$ on $C^\infty(X_{\mathcal O})$ defined by
\begin{equation*}
\xi \cdot f (u_+ e^h u_-, h) :=  \left.\frac{d}{ds}\right |_{s=0} f(\exp(-s \xi)u_+ e^h u_-,h_s),\qquad \xi\in\mathfrak g_\R,
\end{equation*}  
where $s\mapsto h_s$ is the lift of the torus factor of $\exp(-s \xi)u_+ e^h u_-$
to $\mathfrak h_\R$ with $h_0=h$, defined for small enough $s$.

The connection between the quantum Toda Hamiltonian and Whittaker functions can now be stated as follows, compare \cite{Etingof:QWhittaker}.

\begin{enumerate}
\item
If $f$ is a Whittaker function (for $\chi_+$ and $\chi_-$) and $c$ is a central element in  $\mathcal U(\mathfrak g)$, then $c\cdot f$ is again a Whittaker function.
The resulting action of $\mathcal Z(\mathfrak g)$ on  Whittaker functions on $X_{\mathcal O}$ factors through a representation of $\mathcal Z(\mathfrak g)$ on  $C^{\infty}(\mathcal O)$, by restriction to the Cartan factor and extension.
\item 
Let $c_2$ be the degree two Casimir  such that $\gamma(c_2)(a)=\left <a,a\right >$. Then the action of $c_2$ on $C^{\infty}(\mathcal O)$
as defined in the previous equation together with (1) above is given by 
\begin{equation*}
c_2\cdot f = e^{\rho}\ \left(\frac 1 2\Delta + \sum_{i\in I}\chi_+^{(i)}\chi_-^{(i)}e^{\alpha_i}\right ) (e^{-\rho} f),\qquad f\in C^\infty(\mathcal O).
\end{equation*}
\end{enumerate}
Constructing solutions to the quantum Toda lattice is thereby equivalent to 
constructing  Whittaker functions annihilated by 
generators $\gamma\inv(\phi_i)$ of $\mathcal Z(\mathfrak g)$. 
[Explicitly, we will use $\chi_+^{(i)}=-\frac 1z$ and $\chi_-^{(i)}=\frac 1z$,
to find solutions for \eqref{e:QTodaHamiltonian}.] 
The construction of appropriate Whittaker functions 
is done via Kostant's theory of  `Whittaker modules'
and their (Whittaker vector) matrix coefficients \cite{Kos:qToda, Kos:Whittaker}. 

\subsection{Relation with the $A$-model of $G^\vee/B^\vee$} 
The quantum Toda lattice for $G$ appears in context of the the $A$-model of $G^\vee/B^\vee$ as
natural quantization of the (Givental-)Kim presentation for the quantum cohomology.
In that setting it can be referred to as the {\it quantum cohomology $D$-module}, or the {\it quantum 
differential equations} of $G^\vee/B^\vee$, see \cite{Giv:EquivGW,Kim:qCohGB}. 
Officially this means that it consists of the differential operators annihilating the 
components of Givental's $J$-function, 
$$
J(t_1,\dotsc, t_n)\in H^*(G^\vee/B^\vee,\C[t_1,\dotsc, t_n, z^{-1}])[[e^{t_1},\dotsc, e^{t_n}]],
$$
which is defined in terms of descendent 2-point Gromov-Witten invariants.
Therefore the Gromov-Witten theory of  
$G^\vee/B^\vee$ 
provides a full set of solutions 
inside 
$
\C[t_1,\dotsc, t_n, z^{-1}][[e^{t_1},\dotsc, e^{t_n}]]
$
to the quantum Toda lattice for $G$, namely
by expansion of the $J$-function with respect to the Schubert basis. 

A direct connection between Givental's $J$-function,
albeit in the $G$-equivariant case, 
and Whittaker modules (inside completed Verma modules) was 
made by A.~Braverman \cite{Braverman:InstantonCounting}.

\subsection{Relation with the $B$-model of $G^\vee/B^\vee$} 
A conjectural $B$-model to $G^\vee/B^\vee$ was constructed in \cite{Rie:MSgen}.
In this $B$-model there is a family (parameterized by $\mathfrak h$) of holomorphic
volume forms on $\mathcal R_{1,w_0}$, giving rise to `period integrals' varying 
over the family. The quantum Toda equations
are supposed to describe the variation of these periods as $h\in\mathfrak h$
varies.

For $SL_n$ the mirrors, written down in explicit coordinates by Givental \cite{Giv:MSFlag},
can be mapped to an open subset of $\mathcal R_{1,w_0}\x \mathfrak h$ in such a way that his $B$-model 
reappears as a pull-back of our Lie theoretic $B$-model, see \cite[Section~9]{Rie:MSgen}. A
connection between Givental's mirrors and Kostant's Whittaker modules in type $A$ was
worked out by GKLO \cite{GKLO:GaussGivental}.

Details of the general construction of a $B$-model for $G^\vee/B^\vee$ follow in Section~\ref{s:mirrors}. In preparation we recall the definition of our basic holomorphic
$N$-form on  $\mathcal R_{1,w_0}$ and a particular variant of it inspired by \cite{GKLO:GaussGivental}.

\section{Two regular $N$-forms on $\mathcal R_{1,w_0}$}\label{s:Nform}

The following is a special case of Proposition~7.2 in \cite{Rie:MSgen}.

\begin{prop}\label{p:omega} 
Let $\mathbf i=(i_1,\dotsc, i_N)\in I^N$ correspond to a reduced expression $s_{i_1}s_{i_2}\dotsc s_{i_N}$ of $w_0$ in $W$. 
There is a unique holomorphic $N$-form $\omega_{\mathbf i}$ on 
$\mathcal R_{1,w_0}$ 
such that the 
restriction of $\omega_{\mathbf i}$ to the open subset 
$$
\mathcal R_{\mathbf i}=\{x_{i_1}(a_1)\cdots x_{i_N}(a_N) B_-\ | \ a_i\in \C^*\}
$$ 
in $\mathcal R_{1,w_0}$ is given by
 $$
  \frac{d a_1}{a_1} \wedge \frac{d a_2}{a_2} \wedge\cdots \wedge \frac{da_N }{a_N}.
 $$ 
If  $\mathbf j$ is another reduced  expression of $w_0$, and is related to $\mathbf i$ by 
a single braid relation of length $m$, then 
$$
\omega_{\mathbf j}=(-1)^{m+1}\omega_{\mathbf i}.
$$
In particular the form $\omega_{\mathbf i}$ is independent of the reduced expression 
$\mathbf i$ up to sign. \qed
\end{prop}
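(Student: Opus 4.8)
The plan is to construct $\omega_{\mathbf i}$ directly from the coordinate patch $\mathcal R_{\mathbf i}$ and then check independence of the reduced word by analyzing how the two patches overlap. First I would observe that the map $(\C^*)^N \to \mathcal R_{\mathbf i}$, $(a_1,\dots,a_N)\mapsto x_{i_1}(a_1)\cdots x_{i_N}(a_N)B_-$ is an isomorphism onto a Zariski open subset of $\mathcal R_{1,w_0}$ (this is the standard Bruhat/Lusztig parametrization of the cell associated to the reduced word, restricted away from the boundary divisors $a_k=0$). Pulling back the top form $\frac{da_1}{a_1}\wedge\cdots\wedge\frac{da_N}{a_N}$ along the inverse gives a holomorphic $N$-form on $\mathcal R_{\mathbf i}$; I must then argue it extends (uniquely) to all of $\mathcal R_{1,w_0}$. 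Uniqueness of any such extension is immediate since $\mathcal R_{\mathbf i}$ is dense in the irreducible variety $\mathcal R_{1,w_0}$ and the sheaf of $N$-forms is torsion-free. Existence of the extension is where the geometry enters: $\mathcal R_{1,w_0}$ is smooth of dimension $N$ and its canonical bundle is known (by Proposition~\ref{p:omega}'s own remark, or rather the surrounding discussion, $\mathcal R_{1,w_0}$ is the complement of an anti-canonical divisor in $G/B_-$), so a nowhere-vanishing holomorphic top form exists globally; it then suffices to check that our locally-defined form has neither zeros nor poles along the complementary divisors $\mathcal R_{1,w_0}\setminus\mathcal R_{\mathbf i}$, which can be done one divisor at a time using a second chart in which that divisor becomes a coordinate hyperplane.

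The more delicate assertion is the transformation rule under a braid relation. Suppose $\mathbf i$ and $\mathbf j$ differ in a single braid move of length $m$ (so $m\in\{2,3,4,6\}$), affecting a contiguous block of $m$ consecutive indices. Since both $\omega_{\mathbf i}$ and $\omega_{\mathbf j}$ are holomorphic $N$-forms on the irreducible $\mathcal R_{1,w_0}$, they differ by multiplication by a regular invertible function; but $\mathcal R_{1,w_0}$ is (up to the torus) an affine variety whose only invertible regular functions are, after pulling back to $(\C^*)^N$, Laurent monomials — and comparing with the form $\frac{da_1}{a_1}\wedge\cdots$ in either chart forces that monomial to be a constant, in fact a root of unity, hence $\pm 1$. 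To pin down the sign as $(-1)^{m+1}$ I would reduce to the rank-two situation: the braid relation only involves the two simple roots $\alpha_{i_k},\alpha_{i_{k+1}}$ appearing in the block, and by a standard argument the change-of-coordinates between the $\mathbf i$-chart and the $\mathbf j$-chart factors as the identity on the "outside" coordinates times the rank-two transition map on the block, so the Jacobian is exactly that of the rank-two case. Then I would compute the rank-two transition map explicitly for each of $A_1\times A_1$ ($m=2$), $A_2$ ($m=3$), $B_2$ ($m=4$), $G_2$ ($m=6$), using the identities $x_i(a)x_j(b)\cdots = x_j(b')x_i(a')\cdots$ for commuting or rank-two simple factors (Lusztig's explicit formulas for positivity-preserving coordinate changes), and extract the sign of the determinant of $\partial(\log a'_\bullet)/\partial(\log a_\bullet)$; the claim is that this determinant equals $(-1)^{m+1}$.

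The final sentence — that $\omega_{\mathbf i}$ is independent of $\mathbf i$ up to sign — follows once the braid-move rule is established, because any two reduced words for $w_0$ are connected by a sequence of braid moves (Tits), so iterating gives $\omega_{\mathbf j} = \pm\,\omega_{\mathbf i}$ for all reduced words; there is nothing further to check for this weaker statement (the precise sign depends on the path of braid moves and need not be canonical). I expect the main obstacle to be the explicit rank-two computations, especially the $G_2$ case with $m=6$, where the transition map between the two length-six factorizations is genuinely complicated; the saving grace is that one only needs the \emph{sign} of a Jacobian, not the full transition map, and by continuity/positivity of Lusztig's coordinate changes along the totally positive part one can often read off that sign from a single convenient point or from a degeneration, rather than from the closed-form rational expressions. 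Since this Proposition is quoted as a special case of \cite[Proposition~7.2]{Rie:MSgen}, in the paper itself the proof is presumably just a citation; the above is how one would prove it from scratch.
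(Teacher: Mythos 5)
The paper itself contains no argument for this proposition: it is quoted, and closed with a proof box, as a special case of Proposition~7.2 of \cite{Rie:MSgen}, so your sketch has to be judged as a from-scratch reconstruction rather than compared line by line. The parts of your plan dealing with the braid relation are sound: a braid move leaves the prefix and suffix coordinates untouched, so the transition map is the rank-two one on the affected block, and since the rank-two transition maps are subtraction-free and the positive chart is connected, the sign of the Jacobian can indeed be read off at a single totally positive point (one checks $(-1)^{m+1}$ directly for $m=2,3$, and the same one-point evaluation handles $B_2$ and $G_2$). Tits' theorem then gives independence up to sign, as you say.

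The genuine gap is in the existence/extension step, which is the real content of the proposition. First, the fact you lean on --- that $\mathcal R_{1,w_0}$ is the complement of an anticanonical divisor --- is stated in this paper as a \emph{consequence} of the existence of $\omega$ (the divisor is defined via the inverse of the section $\omega$), so invoking it here is circular unless you prove it independently (e.g.\ that the union of the codimension-one Schubert and opposite Schubert divisors is anticanonical in $G/B_-$). Second, ``check that the form has neither zeros nor poles along the complementary divisors, one divisor at a time in a second chart'' is a plan, not an argument: you would need to identify the codimension-one components of $\mathcal R_{1,w_0}\setminus\mathcal R_{\mathbf i}$ and exhibit charts meeting them; the natural candidates are the other reduced-word charts $\mathcal R_{\mathbf j}$, but that their union covers $\mathcal R_{1,w_0}$ up to codimension two is neither obvious nor addressed, and this is precisely where the work lies in the cited proof. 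Third, your units argument only shows that $\omega_{\mathbf j}/\omega_{\mathbf i}$ pulls back on a chart to a constant times a Laurent monomial; concluding it is $\pm 1$ still requires the transition computation or a further invariance argument, since $T$-invariance alone does not exclude nonconstant monomials (the ratio of two coordinates attached to the same simple root is $T$-invariant). So the strategy is reasonable and close in spirit to what one must do, but as written the central regularity claim is assumed rather than proved, and the paper's own ``proof'' is the citation to \cite{Rie:MSgen}.
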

We note that the subset $\mathcal R_{\mathbf i}$ in $\mathcal R_{1,w_0}$
is the open stratum in a stratification introduced by Deodhar~\cite{Deo:Decomp,MarRie:ansatz}. 

In the following we assume a reduced expression $\mathbf i$ has been chosen, and suppress the 
subscript $\mathbf i$, referring to the $N$-form defined in Proposition~\ref{p:omega} 
simply as $\omega$. 
The variant of our form $\omega$ defined by
\begin{equation}\label{e:wGKLO}
\omega_{GKLO}(uB_-):= \left<u\cdot v^-_{\rho }, v^+_{\rho }\right>
\omega(uB_-),
\end{equation}
 is a Lie-theoretic version of
 the $N$-form  introduced by \cite{GKLO:GaussGivental} in the 
type $A$ case. 

\begin{prop}\label{p:transformations} For any $g\in G$ denote by 
$\kappa_g:G/B_-\to G/B_-$ the map of 
left translation, $\kappa_{g}(g'B_-)=gg' B_-$. Let $\omega$ and $\omega_{GKLO}$ be
viewed as rational $N$-forms on $G/B_-$. Then we have the identities
\begin{align}\label{e:trans}
&\kappa_{g}^*\omega(uB_-)=\frac{\left< u\cdot v^-_\rho,v^+_\rho\right>}{\left<gu\cdot v^-_\rho,v^+_\rho\right>
\left<g u\cdot v^-_\rho,v^-_\rho\right>}\omega(uB_-), \\ \label{e:GKLOtrans}
&\kappa_{g}^*\omega_{GKLO}(uB_-)=\frac 1{\left<gu\cdot v^-_\rho,v^-_\rho\right>^2}\omega_{GKLO}(uB_-).
\end{align}
\end{prop}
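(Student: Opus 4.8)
The two identities \eqref{e:trans} and \eqref{e:GKLOtrans} are equivalent given the definition \eqref{e:wGKLO}: if we know \eqref{e:trans}, then multiplying by the transformation of the factor $\left<u\cdot v^-_\rho,v^+_\rho\right>$ under $\kappa_g$ yields \eqref{e:GKLOtrans}, and conversely. So I would first reduce to proving \eqref{e:GKLOtrans}, which is the cleaner of the two, since $\omega_{GKLO}$ transforms by a pure square and so is essentially the \emph{unique} (up to scalar) rational section of the canonical bundle that is a simultaneous eigensection-like object for the $U_+$-action. The key observation is that the matrix coefficient $m(g):=\left<g\cdot v^-_\rho, v^-_\rho\right>$ is, as a function of $gB_-\in G/B_-$, a section of the line bundle $\mathcal L_\rho:=\mathcal L(-\rho)$ (or its dual, depending on conventions) pulled back along $G\to G/B_-$, because $v^-_\rho$ is a lowest weight vector and hence $B_-$-semiinvariant; its divisor of zeros is the Schubert divisor $X_{w_0 s}$ summed appropriately, equivalently the complement of the big cell $B_-\cdot B_+$. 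Thus $m(g)^{-2}$ as a function of $uB_-$ has the correct pole behaviour to match the anticanonical nature of $\omega_{GKLO}$.

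\textbf{Key steps.} First, fix the reduced word $\mathbf i$ and work on the open chart $\mathcal R_{\mathbf i}$, where $\omega = \frac{da_1}{a_1}\wedge\cdots\wedge\frac{da_N}{a_N}$. Second, compute $\left<u\cdot v^-_\rho, v^+_\rho\right>$ for $u = x_{i_1}(a_1)\cdots x_{i_N}(a_N)$: this is a product (or more precisely a polynomial) in the $a_j$ that can be evaluated using the $SL_2$-theory for each Chevalley generator, and by the theory of generalized minors (Fomin–Zelevinsky) it equals the product $a_1 a_2\cdots a_N$ up to a unit, since moving a highest weight vector to meet a lowest weight vector picks up exactly the product of the simple-root parameters along the reduced word. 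Hence $\omega_{GKLO}$ on $\mathcal R_{\mathbf i}$ is (up to sign) $da_1\wedge\cdots\wedge da_N$, a translation-invariant form on the affine space coordinatized by $(a_1,\dots,a_N)$ — but that is the wrong affine structure for left translation, so this alone does not finish it. Third, and this is the substantive computation: pull back $\omega_{GKLO}$ (now viewed as a rational $N$-form on all of $G/B_-$) along $\kappa_g$ and compare. Here I would use the fact, proved in \cite{Rie:MSgen} (this is the content behind Proposition~\ref{p:omega} and its proof), that $\omega$ can be characterized intrinsically: it is the unique, up to scalar, rational section of $K_{G/B_-}$ with pole exactly along the anticanonical divisor $\partial\mathcal R_{1,w_0}$. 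The pullback $\kappa_g^*\omega_{GKLO}$ is then another rational section of $K_{G/B_-}$ whose divisor I must identify. Since $\kappa_g$ is an automorphism of $G/B_-$, it sends $\mathcal R_{1,w_0}$ to a translate, and the pole divisor of $\kappa_g^*\omega_{GKLO}$ is the preimage of the pole divisor of $\omega_{GKLO}$; expressing the defining equation of that pole divisor in terms of the matrix coefficients $m(\cdot)$ gives the claimed rational factor $m(gu)^{-2}$ directly, once one checks the normalization at a single convenient point (e.g.\ $u = e$, or a point where $\kappa_g$ fixes the chart to leading order).

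\textbf{Main obstacle and how I would handle it.} The genuinely delicate point is getting the normalizing constant (and sign) exactly right, i.e.\ confirming there is no extra scalar or unit in front of $\frac{1}{\left<gu\cdot v^-_\rho,v^-_\rho\right>^2}$. The divisor-theoretic argument shows $\kappa_g^*\omega_{GKLO}$ equals $\frac{c}{m(gu)^2}\omega_{GKLO}$ for \emph{some} constant $c$ (independent of $u$ and, by a connectedness/continuity argument in $g$, independent of $g$ too, since at $g = e$ we get $c = 1$). To pin down $c = 1$ rigorously I would either (i) evaluate both sides at a single explicit point and a single explicit tangent $N$-frame, choosing $uB_- = B_-/B_-$ and $g\in U_+$ small so that the chart and the matrix coefficient are both easy to differentiate; or (ii) observe that $g\mapsto \kappa_g^*\omega_{GKLO}\cdot m(g\,\cdot)^2/\omega_{GKLO}$ is a regular function on a connected group (an open subset of $G$) equal to $1$ at the identity, hence identically $1$ by a cocycle/multiplicativity argument: indeed $\kappa_{g_1 g_2} = \kappa_{g_1}\circ\kappa_{g_2}$ forces the scalar to be multiplicative in $g$, and a multiplicative function on $G$ valued in $\C^*$ that is $1$ near the identity is constant. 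This cocycle argument is the cleanest route and also automatically handles the sign. Once \eqref{e:GKLOtrans} is established, \eqref{e:trans} follows by dividing out the transformation law of the scalar $\left<u\cdot v^-_\rho,v^+_\rho\right>$, which is itself a matrix coefficient of a highest-weight-type vector and transforms under $\kappa_g$ by a single power of a (different) matrix coefficient, namely $\left<gu\cdot v^-_\rho,v^+_\rho\right>/\left<u\cdot v^-_\rho,v^+_\rho\right>$ in the denominator, reproducing exactly the asymmetric factor in \eqref{e:trans}.
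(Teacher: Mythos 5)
Your route is genuinely different from the paper's. The paper first notes the equivalence of \eqref{e:trans} and \eqref{e:GKLOtrans}, then shows that the set of $g$ for which \eqref{e:GKLOtrans} holds is closed under multiplication (the computation with $g_2u=u_{g_2}b_{g_2}$ and lowest-weight equivariance), and finally verifies the formula by hand for the generators $\exp(h)$, $y_i(s)$, $x_i(s)$ in the chart $\mathcal R_{\mathbf i}$ (a Jacobian/telescoping-product computation). You instead argue globally on the projective variety: compare divisors of $\kappa_g^*\omega_{GKLO}$ and $\omega_{GKLO}$ to get the formula up to a scalar $c(g)$, and then kill the scalar using multiplicativity in $g$. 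What your approach buys is a conceptual explanation of the exponent $2$ (via $K_{G/B_-}\cong L_{-2\rho}$) and a treatment of all $g$ at once; note, though, that the multiplicativity of $c(g)$ still requires exactly the small $B_-$-equivariance computation (renormalizing $g_2u$ by $b_{g_2}$ and cancelling $\rho(b_{g_2})$) that the paper carries out in its ``claim'', so the two proofs share that ingredient, used for different purposes.

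Two steps in your sketch need real repair. First, the divisor identification is asserted rather than proved: you need $\operatorname{div}(\omega_{GKLO})=-2\sum_i D_i$, i.e.\ poles of order exactly two along each irreducible component of the complement of the big cell $B_+\cdot B_-$ and no zeros anywhere (and note the cell: $\left<g\cdot v^-_\rho,v^-_\rho\right>\neq 0$ precisely when $g\in U_+B_-$, so the relevant boundary is that of $B_+\cdot B_-$, not of $B_-\cdot B_+$ as you wrote). This does follow from $\operatorname{div}(\omega)=-\partial\mathcal R_{1,w_0}$ together with the divisor of the rational function $uB_-\mapsto\left<u\cdot v^-_\rho,v^+_\rho\right>$ (zeros along the $B_-$-side boundary divisors, poles along the $B_+$-side ones, each with multiplicity one since $\rho=\sum_i\omega_i$), but without this bookkeeping your comparison only matches divisor classes, not divisors, and the equality ``$\kappa_g^*\omega_{GKLO}=c\,m(gu)^{-2}\omega_{GKLO}$'' does not yet follow. (Your aside that $\omega_{GKLO}$ restricts to $\pm\,da_1\wedge\cdots\wedge da_N$ on $\mathcal R_{\mathbf i}$ is false in general --- for $SL_3$ with $\mathbf i=(1,2,1)$ one gets $a_2\,da_1\wedge da_2\wedge da_3$ --- but you do not use it.) Second, the normalization argument is incomplete as stated: knowing $c(e)=1$ plus multiplicativity does not give $c\equiv 1$, since every character equals $1$ at the identity, and you have not shown $c=1$ on a neighborhood of the identity. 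The clean finish is that $c:G\to\C^*$ is a group homomorphism and $G$, being simple (hence perfect), admits no nontrivial ones; the alternative of checking $c=1$ on generators essentially reproduces the paper's chart computations. With these two points supplied your argument goes through, and your final deduction of \eqref{e:trans} from \eqref{e:GKLOtrans} is correct once one records that the coefficient $\left<u\cdot v^-_\rho,v^+_\rho\right>$ transforms to $\left<gu\cdot v^-_\rho,v^+_\rho\right>/\left<gu\cdot v^-_\rho,v^-_\rho\right>$, which is what produces the asymmetric factor.
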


\begin{rem}\label{r:transformations}
Proposition~\ref{p:transformations} implies the following identities.
\begin{enumerate}
 \item For $h\in\mathfrak h$ and Chevalley generators $f_i,e_i$ the volume form $\omega$
 transforms according to
 \begin{eqnarray*}
 \kappa^*_{\exp(h)}\omega&=&\omega,\\
 \kappa^*_{y_i(s)}\omega&=&\big (1+e_i^*(u)s\big)\inv\omega,\\
 \kappa^*_{x_i(s)}\omega&=&\left(1+ \frac{\left<u\cdot v^-_\rho,f_i\cdot v^+_\rho\right >}
 {\left<u\cdot v^-_\rho, v^+_\rho\right >}s\right)\inv\omega.\\
 \end{eqnarray*}
 \item The alternative volume form $\omega_{GKLO}$ is $U_+$-invariant. In particular it
 is well-defined on the entire big cell $U_+B_-/B_-$. However it is not $T$-invariant, satisfying
 instead
 $$
\kappa^*_{\exp(h)}\omega_{GKLO}=e^{2 \rho(h)}\omega_{GKLO}.
 $$ 
\end{enumerate}
\end{rem}
\begin{proof}
It is easy to see that the formulas \eqref{e:trans} and \eqref{e:GKLOtrans} 
are equivalent to one another. 
Now suppose $g_1,g_2\in G$ are such that the identity \eqref{e:GKLOtrans} holds. 
We claim that 
\begin{equation}\label{e:}
\kappa_{g_2}^*\left(\kappa_{g_1}^*\omega_{GKLO}\right)(uB_-)=
\frac 1{\left<g_1 g_2 u\cdot v^-_\rho,v^-_\rho\right>^2}\omega_{GKLO}(uB_-).
\end{equation}
If so, then the formula \eqref{e:GKLOtrans}  follows also for 
$g=g_1 g_2$, and therefore we need to check it (or equivalently \eqref{e:trans}) 
only on a generating subset of $G$.

To prove the claim compute
\begin{multline}\label{e:mult1}
\kappa_{g_2}^*\left(\kappa_{g_1}^*\omega_{GKLO}\right)(uB^-)=\kappa_{g_2}^*\left(
\frac 1{\left<g_1u\cdot v^-_\rho,v^-_\rho\right>^2}\omega_{GKLO}\right)(uB_-)=\\
\kappa_{g_2}^*\left(uB_-\mapsto
\frac 1{\left<g_1u\cdot v^-_\rho,v^-_\rho\right>^2}\right)
\frac 1{\left<g_2u\cdot v^-_\rho,v^-_\rho\right>^2} \omega_{GKLO}(uB_-).
\end{multline}
To apply $\kappa_{g_2}^*$ above we need a factorization
$$
g_2u=u_{g_2}b_{g_2}
$$
where $u_{g_2}\in U_+$ and $b_{g_2}\in B_-$. Then 
\begin{multline}\label{e:mult2}
\kappa_{g_2}^*\left(uB_-\mapsto
\frac 1{\left<g_1u\cdot v^-_\rho,v^-_\rho\right>^2}\right) (uB_-)=
\frac 1{\left<g_1u_{g_2}\cdot v^-_\rho,v^-_\rho\right>^2}\\
=
\frac 1{\left<g_1g_2 u b_{g_2}\inv \cdot v^-_\rho,v^-_\rho\right>^2}=
\frac 1{\left<g_1g_2 u \cdot v^-_\rho,v^-_\rho\right>^2} \frac 1{\rho (b_{g_2})^2}\\
=\frac 1{\left<g_1g_2 u \cdot v^-_\rho,v^-_\rho\right>^2} \left<g_2u\cdot v^-_\rho,v^-_\rho\right>^2.
\end{multline}
The claim now follows from the combination of \eqref{e:mult1} and \eqref{e:mult2}.

To finish the proof it suffices to check the identities from Remark~\ref{r:transformations}.
 Let us choose a reduced expression $\mathbf i$ of $w_0$ 
such that $\omega$ restricted to 
$$
\mathcal R_{\mathbf i}=\{x_{i_1}(a_1)\dotsc x_{i_N}(a_N)B_-\ |\ a_i\ne 0\}
$$ 
takes the form 
$$
\frac{da_1}{a_1}\wedge\cdots\wedge \frac{d a_N}{a_N}.
$$  
Note that it is clear from the definition that $\omega$ is invariant under translation by 
elements of $T$. In fact $\mathcal R_{\mathbf i}$ is itself a much bigger torus,
$\mathcal R_{\mathbf i} \cong (\C^*)^N$,  on which $\omega$ is 
the standard invariant $N$-form. We are therefore left with two kinds of transformations to consider,
$\kappa_{y_i(s)}$ and $\kappa_{x_i(s)}$.

\vskip .3cm
\noindent (1)\quad 
To work out the coordinate transformation corresponding to $\kappa_{y_i(s)}$
we note that $y_i(s)x_j(a)=x_j(a)y_i(s)$ for $i\ne j$ and 
$$
y_i(s)x_i(a)=x_i\left(\frac{a}{1+as}\right)\ \left(\frac{1}{1+as}\right)^{\hskip -.07cm\alpha_i^\vee}\
y_i\left(\frac{s}{1+as}\right).
$$
Supose $1\le l_1<\dotsc <l_m\le N$ are the indices for which $i_{l_j}=i$. Applying the above identities repeatedly we obtain
\begin{multline*}
y_i(s)x_{i_1}(a_1)\dotsc x_{i_N}(a_N)=
x_{i_1}(a_1)\dotsc x_{i_{l_1}}(a_{l_1}')\left(\frac {a_{l_1}'}{ a_{l_1}}\right)^{\hskip -.07cm\alpha_i^\vee} x_{i_{l_1+1}}(a_{l_1+1})\dotsc\\
\dotsc x_{i_{l_2}}(a_{l_2}')\left(\frac {a_{l_2}'} {a_{l_2}}\right)^{\hskip -.07cm\alpha_i^\vee}
\dotsc
x_{i_{l_m}}(a_{l_m}')\left(\frac {a_{l_m}'} {a_{l_m}}\right)^{\hskip -.07cm\alpha_i^\vee}\dotsc x_{i_N}(a_N)
y_i\left(\frac{s}{1+(a_{l_1}+\dotsc +a_{l_m})s}\right).
\end{multline*}
where
$$
a_{l_j}'=\frac{a_{l_j}(1+(a_{l_1}+\dotsc + a_{l_{j-1}})s)}{1+(a_{l_1}+\dotsc + a_{l_{j}})s}.
$$
Since $\omega$ restricted to $\mathcal R_{\mathbf i}$ is invariant under the
action of the `big' torus, that is $\mathcal R_{\mathbf i}$ itself, we may disregard
the factors  $\left(\frac {a_{l_j}'} {a_{l_j}}\right )^{\hskip -.07cm \alpha_i^\vee}$. Thus $\omega$ transforms
under $\kappa_{y_i(s)}$ as under the coordinate transformation 
$$
(a_1,\dotsc, a_N)\mapsto (a'_1,\dotsc, a'_N),
$$
where $a_j':=a_j$ if $j\notin\{l_1,\dotsc, l_m\}$. Since this coordinate transformation
is lower triangular its Jacobian is easily computed to be
$$
\det\left(\frac{\partial a'_j}{\partial a_k}\right)_{j,k}=\prod_{j=1}^m
\left(\frac{1+(a_{l_1}+\dotsc+a_{l_{j-1}})s}{1+(a_{l_1}+\dotsc+a_{l_{j}})s}\right)^2=
\left (\frac{1}{1+(a_{l_1}+\dotsc+a_{l_{m}})s}\right )^2.
$$
Note also  the telescopic product identity
$$
\frac{1} {\prod_{j=1}^N a'_j}=(1+(a_{l_1}+\dotsc+a_{l_{m}})s)\ \frac{1} {\prod_{j=1}^N a_j}.
$$
Therefore we obtain
\begin{multline*}
\frac{da'_1}{a'_1}\wedge\frac{da'_2}{a'_2}\wedge\cdots\wedge\frac{da'_N}{a'_N}=
 \frac{1}{\prod_{j=1}^N a'_j} \left(\frac{1}{1+(a_{l_1}+\dotsc+a_{l_{m}})s}\right)^2
da_1\wedge da_2\wedge\cdots\wedge da_N\\
=\frac{1}{1+(a_{l_1}+\dotsc+a_{l_{m}})s}\omega.
\end{multline*}
Clearly $a_{l_1}+\dotsc+a_{l_{m}}$ is nothing other than $e_i^*(u)$
for $u=x_{i_1}(a_1)\dotsc x_{i_N}(a_N)$, confirming the identity
from Remark~\ref{r:transformations}.

\vskip .3cm
\noindent (2)\quad 
To apply $\kappa_{x_i(s)}^*$ to $\omega$ let us assume without loss of generality that 
the reduced expression $\mathbf i$ of $w_0$ 
begins with $i_1=i$. 
Then $\kappa_{x_i(s)}$ corresponds to the coordinate transformation 
$$
(a_1,a_2,\dotsc, a_N)\mapsto (a_1+s,a_2,\dotsc, a_N),
$$
and therefore 
$$
\kappa_{x_i(s)}^*\omega= \frac{a_1}{a_1+s}\omega.
$$
It is easy to see that for $u=x_{i_1}(a_1)\dotsc x_{i_N}(a_N)$ we have indeed
$$
\left(1+\frac{\left<u\cdot v^-_\rho,f_i\cdot v^+_\rho\right >}
 {\left<u\cdot v^-_\rho, v^+_\rho\right >}s \right)\inv=
 \left(1+ \frac 1 {a_1}s\right)\inv=\frac{a_1}{a_1+s}.
$$
\end{proof}

\section{The mirror family to $G^\vee/B^\vee$}\label{s:mirrors}
In this section we will review the ingredients of mirror symmetry 
for the full flag variety $G^\vee/B^\vee$ following \cite{Rie:MSgen}. We 
will consider from the start the variant of the mirror family over 
$\mathfrak h$, rather than the family over $T$.  Also, for convenience, 
we have chosen $G$ simply connected, while in \cite{Rie:MSgen}
the group on the mirror symmetric side was adjoint. The mirror family over $\mathfrak h$
is however unaffected by this change. Finally, we
replace the mirror family from \cite{Rie:MSgen}
by its translate by $\dot w_0$, which will turn out to 
be more natural in connection with the Whittaker functions
we are constructing. 
\subsection{}\label{s:F}
Let
\begin{align}\label{e:Z}
Z&:=\{( g,h)\in U_+ T U_- \underset{T}\x \mathfrak h\ |\  g\cdot B_+=B_-\}.
\end{align}
$Z$ is viewed as a family of varieties via the map 
$\pr_2: Z\to \mathfrak h$
projecting onto the second factor. For $h\in \mathfrak h$ let us  write 
\begin{equation*}
Z_h:=  U_+ e^h U_-\cap B_-\dot w_0= \{g\in  U_+ e^h U_-\ |\ g\cdot B_+=B_-\}.
\end{equation*}
$Z_h$ may be identified with the fiber $\pr_2\inv(h)$ in $Z$. 
We record the following basic properties of  the family $Z$. 
\begin{enumerate}
\item
Fix $h\in \mathfrak h$. Then the fiber $Z_h$ is isomorphic to the intersection
of opposite big cells, $\mathcal R_{1,w_0}$, via the map
\begin{eqnarray*}
\beta_h:\ Z_h&\To &\mathcal R_{1,w_0}\\
\qquad\qquad ue^h\bar u\inv &\mapsto &u\inv\cdot B_-=e^{h}\bar u\inv\cdot B_+.
\end{eqnarray*}
In particular $Z_h$ is smooth of 
dimension $N=\dim_\C(G/B)$.
\item
The isomorphisms from (1) can be combined to 
give a trivialization
\begin{equation}\label{e:triv}
\beta:Z\overset\sim\To\mathcal R_{1,w_0}\x \mathfrak h,
\end{equation}
where $(u e^h\bar u\inv, h)\mapsto (u\inv\cdot B_-,h)$.
\end{enumerate}
Note that in particular
$$
Z_0= U_+ U_-\cap B_-\dot w_0=\{u \bar u\inv \in U_+ U_- \ | \ u\inv\cdot B_-=\bar u\inv\cdot B_+ \},
$$
and we have the isomorphism
\begin{equation}\label{e:beta0}
\beta_0: Z_0 \overset\sim\To \mathcal R_{1,w_0},\qquad u\bar u\inv\ \mapsto\  u\inv\cdot B_-=\bar u\inv\cdot B_+ .
\end{equation}
\subsection{}\label{s:superpotential}
We define a function $\mathcal F$ on $Z$ (the `superpotential') 
as follows.
\begin{equation}\label{e:F}
\mathcal F(u e^h\bar u\inv,h;z)=\frac 1 z\left( \sum_{i\in I} e_i^*(u )+\sum_{i\in I}f_i^*(\bar u)\right),
\end{equation}
where we may think of $z$ as a positive parameter $z\in\R_{>0}$. The restriction of
$\mathcal F$ to a fiber $Z_h$ of the mirror family (or rather $Z_h\x \R_{>0}$) is denoted by 
$\mathcal F_h$. 

\begin{rem} \label{r:Whittaker}Note that for fixed $z$ the function $e^{\mathcal F(\ ; z)}:Z\to \C$ is the restriction to $Z$ of  
the `trivial' Whittaker function, 
\begin{eqnarray*}
\mathcal W_0: U_+ T U_- \underset{T}\x \mathfrak h &\to & \C, \\
(u_+ e^h  u_-,h) &\mapsto & e^{\chi_+(u_+)} e^{\chi_-(u_-)},
\end{eqnarray*}
corresponding to characters $\chi_+,\chi_-$ defined by $\chi_+(e_i)=\frac 1z$ and
$\chi_-(f_i)=-\frac 1z$,  see also Section~\ref{s:Whittaker}.
\end{rem}

\subsection{}\label{s:omegah} Let $\mathbf i$ be a reduced expression for $w_0$ and  $\omega_{\mathbf i}$ 
the $N$-form on $\mathcal R_{1,w_0}$ defined in 
Proposition~\ref{p:omega}. Denote by 
$\omega_Z$ or $\omega_{\mathbf i, Z}$ the pullback of 
$\omega_\mathbf i$ to $Z$ along the map 
\begin{eqnarray*}
pr_1\circ \beta:& Z &\to \quad\mathcal R_{1,w_0},\\
\qquad &(u e^h\bar u\inv,h) &\mapsto\quad u\inv\cdot B_-,
\end{eqnarray*}
and write $ \omega_h$ or $\omega_{\mathbf i,h}$ for the pullback of
$\omega_Z$ to the fiber $Z_h$.

Note that the non-canonical choice of a reduced expression~$\mathbf i$
will affect at most the sign of $\omega_Z$. 
In some special cases we will consider later on, this sign will cancel out against a sign
coming from the choice of orientation for the integration cycle,
making for a canonical solution to the quantum Toda lattice.  

\vskip .3cm
The mirror datum to $G^\vee/B^\vee$ is now made up 
of the three ingredients introduced above: the family $Z\to\mathfrak h$,
the regular function $\mathcal F$, and the 
holomorphic $N$-form $\omega_Z$ on $Z$. 
We may denote it compactly as $(Z,\omega_Z, \mathcal F)$.

\subsection{A translation action on $Z$}\label{s:translation}
We can transfer the natural action of the additive group of $\mathfrak h$
on the product $\mathfrak h\x \mathcal R_{1,w_0}$, given by $h\cdot (h',g\cdot B_-):=(h'+h,g\cdot B_-)$, 
to $Z$ using the trivialization
$\beta$
from \eqref{e:triv}. 
\begin{lem}\label{l:translation}
Let $h\in \mathfrak h$ and $(h',g)\in Z$. The translation action of  $h$  
on $(h',g)$ takes the form
$$
h\cdot (h',g)=(h+h', g e^h).
$$
\end{lem}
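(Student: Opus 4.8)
\textbf{Proof plan for Lemma~\ref{l:translation}.}
The plan is to unwind the definition of the transferred action through the trivialization $\beta$ and then use the explicit factorization formulas that define $\beta_h$. By construction the action of $h\in\mathfrak h$ on $Z$ is defined so that $\beta$ intertwines it with the action $h\cdot(h',g\cdot B_-)=(h'+h,g\cdot B_-)$ on $\mathcal R_{1,w_0}\x\mathfrak h$; equivalently, if $(h',g)\in Z$ corresponds under $\beta$ to $(h',u\inv\cdot B_-)$, where $g=u e^{h'}\bar u\inv$ in the notation of \S\ref{s:F}, then $h\cdot(h',g)$ is by definition $\beta\inv(h+h',\,u\inv\cdot B_-)$. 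So the whole content of the lemma is to identify this preimage: I must show that the element of $Z_{h+h'}$ mapping to $u\inv\cdot B_-$ under $\beta_{h+h'}$ is exactly $g e^h = u e^{h'}\bar u\inv e^h$.

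First I would write $g = u e^{h'}\bar u\inv$ with $u\in U_+$, $\bar u\in U_-$, which is the canonical form of an element of $Z_{h'}$ (recall $Z_{h'}=U_+e^{h'}U_-\cap B_-\dot w_0$, and $\beta_{h'}(u e^{h'}\bar u\inv)=u\inv\cdot B_-$). Then I would compute $g e^h$ and rearrange it into the shape $u' e^{h+h'}(\bar u')\inv$ demanded by membership in $U_+ e^{h+h'}U_-$. Since $T$ normalizes $U_-$, we have $\bar u\inv e^h = e^h\,(e^{-h}\bar u\inv e^h)$, and $e^{-h}\bar u\inv e^h\in U_-$; writing $\bar u_h := e^{-h}\bar u e^h\in U_-$ we get
\begin{equation*}
g e^h = u e^{h'}\bar u\inv e^h = u\, e^{h'} e^h\, \bar u_h\inv = u\, e^{h+h'}\, \bar u_h\inv .
\end{equation*}
Next I would check this lies in $B_-\dot w_0$, i.e. that $(ge^h)\cdot B_+=B_-$; but right translation by $e^h\in T\subset B_+$ fixes $B_+$, so $(ge^h)\cdot B_+ = g\cdot(e^h\cdot B_+) = g\cdot B_+ = B_-$ since $(h',g)\in Z$. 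Hence $g e^h\in Z_{h+h'}$, and its canonical factorization has $U_+$-part equal to $u$. Therefore $\beta_{h+h'}(ge^h)=u\inv\cdot B_- $, which is precisely $\pr_1\beta(h',g)$, so $h\cdot(h',g)=(h+h',ge^h)$ as claimed.

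I do not expect any serious obstacle here; the only point requiring a little care is the bookkeeping of which factor of the $U_+ e^{?}U_-$ decomposition is used by $\beta$ (namely the $U_+$-factor $u$, which is unchanged by right multiplication by $e^h$) and the observation that right multiplication by $e^h$ preserves the defining condition $g\cdot B_+=B_-$ because $e^h\in B_+$. If anything is "hard" it is simply making sure the conventions of \S\ref{s:F}–\S\ref{s:translation} are applied consistently (in particular that the translation action was \emph{defined} via $\beta$, so that the lemma is really a computation of $\beta\inv$ rather than something needing an independent argument). Everything else is the routine identity $\bar u\inv e^h = e^h \bar u_h\inv$ coming from $T$ normalizing $U_-$.
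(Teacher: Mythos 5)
Your proposal is correct and follows essentially the same route as the paper: write $g=ue^{h'}\bar u\inv$, move $e^h$ past $\bar u\inv$ using that $T$ normalizes $U_-$ to get $ge^h=ue^{h+h'}(e^{-h}\bar u e^h)\inv$, note $(ge^h)\cdot B_+=g\cdot B_+=B_-$, and read off $\beta_{h+h'}(ge^h)=u\inv\cdot B_-$. The only difference is that you spell out more explicitly that the lemma is a computation of $\beta\inv$ for the action defined via the trivialization, which the paper leaves implicit.
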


\begin{proof}
We have that $g\in Z_{h'}$, hence we may write $g=u e^{h'}\bar u\inv$ for
$u\in U_+, \bar u\in U_-$. Then 
\begin{equation*}
ge^{h}= u e^{h'}\bar u\inv \ e^{h}=
u e^{h+h'}{(\bar u')}\inv,
\end{equation*}
where $\bar u'=e^{- h}\bar u e^{h}\in U_-$. Also $g e^h\cdot B_+=g\cdot B_+=B_-$. Hence 
$g e^h\in Z_{h+h'}$. Moreover  $\beta(h+h',g e^h)=(h+h', u\inv\cdot B_-)$
as required.
\end{proof}

\section{Distinguished cycles for integration}\label{s:contours}

\subsection{The compact cycles $\Gamma^{(1)}$}\label{s:gamma1}

\begin{defn}
Let $h\in\mathfrak h$ and let 
$\mathbf i=(i_1,\dotsc, i_N)$ be a reduced expression of $w_0$.
We may define $\Gamma_{\mathbf i,h}^{(1)}\subset Z_h$ to be
$$
\{u e^h \bar u\inv \in Z_h\ |\  u\inv\cdot B_-=x_{i_1}(a_1)\cdots x_{i_N}(a_N)\cdot B_-\ \text{where } \|a_j\|=1\text{ for all $j$ } \}.
$$
Note that  $\Gamma_{\mathbf i,h}^{(1)}$ is naturally isomorphic to a compact torus 
$(S^1)^N$. We define an associated $N$-cycle $[\Gamma_{\mathbf i,h}^{(1)}]\in H_N(Z_h,\Z)$ by choosing the 
anti-clockwise orientation on each $S^1$ factor. The resulting
family of integration contours is denoted $\Gamma^{(1)}_{\mathbf i}:=([\Gamma^{(1)}_{\mathbf i,h}])_{h\in \mathfrak h}$,
or just  $\Gamma^{(1)}$ if the choice of reduced expression is clear or irrelevant.
\end{defn}

\begin{lem}
The cycle $[\Gamma_{\mathbf i,h}^{(1)}]$ defines a nonzero element of $H_N(Z_h,\Z)$. Moreover, if 
two reduced expressions $\mathbf i$ and $\mathbf j$ are related by a braid relation of length $m$ then
$$
[\Gamma_{\mathbf i,h}^{(1)}]=(-1)^{m+1}[\Gamma_{\mathbf j,h}^{(1)}].
$$
In particular, the 
cycle $[\Gamma_{\mathbf i,h}^{(1)}]$ is independent of the reduced expression 
$\mathbf i$ up to sign. 
\end{lem}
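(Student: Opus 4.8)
The plan is to establish the two assertions separately, exploiting the identification of $\Gamma^{(1)}_{\mathbf i,h}$ with a compact torus sitting inside the open stratum $\mathcal R_{\mathbf i}\cong(\C^*)^N$, together with the explicit behaviour of $\omega_{\mathbf i}$ under change of reduced expression recorded in Proposition~\ref{p:omega}.

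For the nonvanishing of $[\Gamma^{(1)}_{\mathbf i,h}]$ in $H_N(Z_h,\Z)$, I would use the form $\omega_h=\omega_{\mathbf i,h}$ as a detecting cochain. Under the isomorphism $\beta_h:Z_h\overset\sim\To\mathcal R_{1,w_0}$ the cycle $\Gamma^{(1)}_{\mathbf i,h}$ is carried to the real compact torus $\{x_{i_1}(a_1)\cdots x_{i_N}(a_N)\cdot B_-\mid \|a_j\|=1\}\subset\mathcal R_{\mathbf i}$, and by Proposition~\ref{p:omega} the pullback of $\omega_{\mathbf i}$ to $\mathcal R_{\mathbf i}\cong(\C^*)^N$ is the standard invariant form $\frac{da_1}{a_1}\wedge\cdots\wedge\frac{da_N}{a_N}$. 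Writing $a_j=e^{i\theta_j}$ one computes
\begin{equation*}
\int_{[\Gamma^{(1)}_{\mathbf i,h}]}\omega_h=\int_{(S^1)^N}\frac{da_1}{a_1}\wedge\cdots\wedge\frac{da_N}{a_N}=(2\pi i)^N\neq 0,
\end{equation*}
using the anti-clockwise orientation on each factor. Since $\omega_h$ is a closed $N$-form on $Z_h$ (being holomorphic of top degree on an $N$-dimensional variety, hence $d$-closed), integration against it gives a well-defined linear functional on $H_N(Z_h,\C)$, and the above shows this functional is nonzero on $[\Gamma^{(1)}_{\mathbf i,h}]$; therefore the class is nonzero already in $H_N(Z_h,\Z)$.

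For the braid-relation comparison, suppose $\mathbf i$ and $\mathbf j$ are reduced expressions of $w_0$ related by a single braid move of length $m$. I would first observe that the two compact tori $\Gamma^{(1)}_{\mathbf i,h}$ and $\Gamma^{(1)}_{\mathbf j,h}$, transported via $\beta_h$ into $\mathcal R_{1,w_0}$, are homologous in $H_N(\mathcal R_{1,w_0},\Z)$: the braid move changes the Deodhar parametrization only within a small ``block'' of factors, and the two parametrizations of the intersection $\mathcal R_{\mathbf i}\cap\mathcal R_{\mathbf j}$ differ by an invertible monomial coordinate change (this is exactly the content underlying the transition map in Deodhar's stratification, cf.\ \cite{Deo:Decomp,MarRie:ansatz}), so the compact tori are isotopic up to reordering of the $S^1$-factors. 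Consequently
\begin{equation*}
\int_{[\Gamma^{(1)}_{\mathbf i,h}]}\omega_{\mathbf i,h}=\pm\int_{[\Gamma^{(1)}_{\mathbf j,h}]}\omega_{\mathbf i,h},
\end{equation*}
and combining this with $\omega_{\mathbf j}=(-1)^{m+1}\omega_{\mathbf i}$ from Proposition~\ref{p:omega} together with the nonvanishing established above pins down the sign: $[\Gamma^{(1)}_{\mathbf i,h}]=(-1)^{m+1}[\Gamma^{(1)}_{\mathbf j,h}]$. The ``in particular'' statement is then immediate, since any two reduced expressions of $w_0$ are connected by a chain of braid moves (Matsumoto/Tits), and each move contributes a well-defined sign.

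I expect the main obstacle to be the middle step: making precise, and sign-honest, the claim that the two compact real tori coming from $\mathbf i$ and $\mathbf j$ represent the same homology class up to an explicit sign. The cleanest route is probably to avoid tracking the geometric isotopy directly and instead argue entirely via the pairing with $\omega$: one knows $H_N(Z_h,\C)$ has the tori as natural classes, one knows $\omega_{\mathbf i,h}$ pairs nontrivially with $[\Gamma^{(1)}_{\mathbf i,h}]$, and the transition between the two coordinate charts on the overlap $\mathcal R_{\mathbf i}\cap\mathcal R_{\mathbf j}$ — a monomial (hence orientation-controllable) map of tori — lets one transport the integral with a computable sign. Establishing that the overlap contains (a deformation retract onto) both compact tori, and that the monomial change of coordinates preserves the product orientation up to the sign dictated by the $m$-term braid relation, is the delicate point; everything else is bookkeeping with Proposition~\ref{p:omega}.
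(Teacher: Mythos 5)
Your first half (nonvanishing of $[\Gamma^{(1)}_{\mathbf i,h}]$ by pairing with $\omega_{\mathbf i,h}$ and computing $(2\pi i)^N$ on the unit torus) is exactly the paper's argument and is fine. The problem is the braid-move comparison. Your key claim --- that on the overlap $\mathcal R_{\mathbf i}\cap\mathcal R_{\mathbf j}$ the two Deodhar parametrizations differ by an invertible \emph{monomial} coordinate change, so that the two unit tori are isotopic up to reordering of $S^1$-factors --- is false for a braid move of length $m\ge 3$. Already in type $A_2$ one has $x_1(a)x_2(b)x_1(c)=x_2\bigl(\tfrac{bc}{a+c}\bigr)x_1(a+c)x_2\bigl(\tfrac{ab}{a+c}\bigr)$: the transition is subtraction-free rational but not monomial, and it degenerates on the locus $a+c=0$, which meets the unit torus $|a|=|c|=1$. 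In particular the compact torus $\Gamma_{\mathbf i}$ is \emph{not} contained in $\mathcal R_{\mathbf j}$ (nor conversely), so the overlap does not contain both tori and the proposed isotopy does not exist as described. Moreover, even granting your displayed equality of integrals $\int_{[\Gamma_{\mathbf i}]}\omega_{\mathbf i}=\pm\int_{[\Gamma_{\mathbf j}]}\omega_{\mathbf i}$, pairing against the single form $\omega_{\mathbf i}$ does not by itself determine a homology class in $H_N(\mathcal R_{1,w_0},\Z)$, which is in general of large rank; you can only ``pin down the sign'' after you know the two classes are proportional, and that proportionality is precisely what your argument fails to establish.

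The missing idea, which is how the paper proceeds, is to first force proportionality by pushing $\Gamma_{\mathbf j}$ into the single chart $\mathcal R_{\mathbf i}$: rescale the radii of the $S^1$-factors of $\Gamma_{\mathbf j}$ (a homotopy inside $\mathcal R_{\mathbf j}$, hence harmless in homology) so that the resulting cycle lies in $\mathcal R_{\mathbf i}\cap\mathcal R_{\mathbf j}$ --- e.g.\ choosing distinct generic radii avoids loci such as $a+c=0$. Since $\mathcal R_{\mathbf i}\cong(\C^*)^N$ has $H_N(\mathcal R_{\mathbf i},\Z)\cong\Z$ generated by $[\Gamma_{\mathbf i}]$, the rescaled cycle is an integer multiple $c\,[\Gamma_{\mathbf i}]$ there, hence $[\Gamma_{\mathbf j}]=c\,[\Gamma_{\mathbf i}]$ in $H_N(\mathcal R_{1,w_0},\Z)$. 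Only now does the pairing computation you have in mind become decisive: $\int_{[\Gamma_{\mathbf j}]}\omega_{\mathbf i}=(-1)^{m+1}\int_{[\Gamma_{\mathbf j}]}\omega_{\mathbf j}=(-1)^{m+1}(2\pi i)^N$ together with $\int_{[\Gamma_{\mathbf i}]}\omega_{\mathbf i}=(2\pi i)^N$ gives $c=(-1)^{m+1}$. So your overall strategy (detect classes by integrating $\omega$) is the right one, but as written the middle step rests on an incorrect description of the chart transition and omits the rescaling-into-one-chart argument that makes the pairing conclusive.
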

\begin{proof}
We may identify $Z_h$ with $\mathcal R_{1,w_0}$ by $\beta_h$, and 
$\Gamma_{\mathbf i,h}^{(1)}$ with 
$$\Gamma_{\mathbf i}:=\{x_{i_1}(a_1)\cdots x_{i_N}(a_N)\cdot B_-\ |\  \|a_j\|=1\text{ for all $j$ }\},$$
and work in $\mathcal R_{1,w_0}$. By Section~\ref{s:Nform} we have a well defined $N$-form $\omega_{\mathbf i}$ on $\mathcal R_{1,w_0}$. It follows from
$$\int_{[\Gamma_{\mathbf i}]}\omega_{\mathbf i}=(2\pi i)^N
$$
that $[\Gamma_{\mathbf i}]$ defines a nontrivial homology class in $H_N(\mathcal R_{1,w_0},\Z)$.

Now $\Gamma_{\mathbf i}$ lies in the open coordinate patch $\mathcal R_{\mathbf i}$ (see Proposition~\ref{p:omega}) while
$\Gamma_{\mathbf j}$ lies inside $\mathcal R_{\mathbf j}$. Note that $\mathcal R_{\mathbf i}$ is 
isomorphic to $(\C^*)^N$ and therefore $[\Gamma_{\mathbf i}]$
generates $H_N(\mathcal R_{\mathbf i},\Z)$.
Rescaling the $S^1$ factors in $[\Gamma_{\mathbf j}]$ we may replace this 
cycle by one that is homologous but lies in the intersection 
$\mathcal R_{\mathbf i}\cap\mathcal R_{\mathbf j}$, and therefore
also defines a cycle $[\Gamma_{\mathbf j}']$ in $H_N(\mathcal R_{\mathbf i},\Z)$,
It follows that $[\Gamma_{\mathbf j}']$ must be a multiple of the (generating) cycle
$[\Gamma_{\mathbf i}]$ in $H_N(\mathcal R_{\mathbf i},\Z)$. Hence 
$[\Gamma_{\mathbf j}]$ is a multiple of $[\Gamma_{\mathbf i}]$
in $H_N(\mathcal R_{1,w_0},\Z)$. However we have by Proposition~\ref{p:omega} that 
$$
\omega_{\mathbf i}=(-1)^{m+1}\omega_{\mathbf j}.
$$
Therefore  
$$
\int_{[\Gamma_{\mathbf j}]}\omega_{\mathbf i}=(-1)^{m+1}
\int_{[\Gamma_{\mathbf j}]}\omega_{\mathbf j}=
(-1)^{m+1}(2\pi i)^N,
$$
which implies  $[\Gamma_{\mathbf j}]=(-1)^{m+1}[\Gamma_{\mathbf i}]$.
\end{proof}

\subsection{Total positivity and the non-compact integration cycles $\Gamma^{(w_0)}$}
\label{s:gammaw0}

\begin{defn}[Lusztig \cite{Lus:TotPos94}]
Inside the real points of a (split) real algebraic group $G$ and each of 
its related varieties $X=T, U_+, U_-$ and $G/B_-$, there is a semi-algebraic
subset $X^{>0}$ called the totally positive part, which is defined as follows. 

The totally positive part of $T$ is the precisely the subset
of $T$ for which all characters take values in $\R_{>0}$.
Equivalently, if we consider the real points of $T$ (isomorphic to $(\R^*)^n$),
then $T^{>0}$ is the connected component of the identity.

For $U_+$ and $U_-$ the totally positive parts are given by  
\begin{eqnarray*}
U_+^{>0}&:=&\{x_{i_1}(a_1)\dotsc x_{i_N}(a_N) \ | \ a_i\in \R_{>0}\},\\
U_-^{>0}&:=&\{y_{i_1}(a_1)\dotsc y_{i_N}(a_N) \ | \ a_i\in \R_{>0}\},
\end{eqnarray*}
where $\mathbf i=(i_1,\dotsc, i_N)$ is a (any) reduced expression
of $w_0$. One puts these together to build
\begin{eqnarray*}
G^{>0}&:=& U_+^{>0}T_{>0}U_-^{>0}=U_-^{>0}T_{>0}U_+^{>0},
\end{eqnarray*}
where a proof of the last identity may be found in \cite{Lus:TotPos94}.

The totally positive part of the flag variety $\mathcal B$ is
$$ 
\mathcal B^{>0}:=U_+^{>0}\cdot B_-=U_-^{>0}\cdot B_+.
$$
Again the last identity is proved in \cite{Lus:TotPos94}. 
Note that the totally positive part of $\mathcal B$ lies in $\mathcal R_{1,w_0}$.
We may also denote it by $\mathcal R_{1,w_0}^{>0}$. 
\end{defn}

These definitions generalize the classical notion of total positivity 
inside $GL_n$ developed by Polya and Schoenberg among others. 
The introduction of a theory of total positivity to flag varieties 
is due to Lusztig, even in type $A$.

\subsubsection{}
Let $h\in\mathfrak h_\R$, and note that $\exp(\mathfrak h_\R)=T_{>0}$. 
We can now use the trivialization $\beta$ to pull back the totally positive part
in $\mathcal R_{1,w_0}$ to the mirror fibers $Z_h$. We will refrain from 
calling this the totally positive part of $Z_h$, which instead will be 
defined differently in Section~\ref{s:pos}. 
Let 
\begin{multline*}
\Gamma^{(w_0)}_{h}:=\{g\in Z_h\ |\ \beta_h(g)\in \mathcal B^{>0}\}=\{ue^h\bar u\inv\in Z_h\ |\ u\inv\in U_+^{>0}\ \} \\
=\{ue^h\bar u\inv\in Z_h\ |\ \bar u\inv\in U_-^{>0}\ \},
\end{multline*}
where the final equality uses that $e^h\in T_{>0}$. We will see in 
Section~\ref{s:pos} that $\mathcal F_h$ always has a critical 
point in $\Gamma^{(w_0)}_{h}$.

For any choice of reduced expression $\mathbf i$ of $w_0$ we obtain 
a parameterization,
\begin{eqnarray*}
\R_{>0}^{N}&\overset\sim\to &\Gamma^{(w_0)}_{h}\\
(a_1,\dotsc, a_N)&\mapsto & \beta_h\inv(x_{i_1}(a_1)\dotsc x_{i_N}(a_N)\cdot B_-),
\end{eqnarray*}
which gives rise to an orientation on $\Gamma^{(w_0)}_{h}$. We denote by
$
[\Gamma^{(w_0)}_{\mathbf i,h}]
$
the oriented real (semi-algebraic) manifold inside $Z_h$ obtained in this way,
and by $\Gamma^{(w_0)}=\Gamma^{(w_0)}_{\mathbf i}=([\Gamma^{(w_0)}_{\mathbf i,h}])_{h\in\mathfrak h_{\R}}$,
the corresponding family over $\mathfrak h_{\R}$. 

By Proposition~\ref{p:omega},
if $\mathbf j$ is a reduced expression
obtained from $\mathbf i$ by a braid relation of length $m$, then 
the corresponding (rational subtraction-free) coordinate transformation 
$(a_1,\dotsc, a_N)\mapsto (a_1',\dotsc, a_N')$ reverses orientations
precisely if $m$ is even.
Therefore the orientation of $[\Gamma^{(w_0)}_{\mathbf i,h}]$
depends on $\mathbf i$ in the same way.

\subsection{Conditions on more general families}\label{s:gencontours}

Let $\mathcal O$ be a connected open subset of $\mathfrak h_\R$ or  $\mathfrak h$. Let 
$\Gamma=([\Gamma_h])_{h\in\mathcal O}$ be a continuous family
of  real, possibly non-compact,  $N$-dimensional
semianalytic sets $\Gamma_h$ in $Z_h$ with specified orientation, for which
$\operatorname{Re}(\mathcal F_h)\to -\infty$ 
in any non-compact direction of $\Gamma_h$, for a/any fixed $z>0$.
In this case 
\begin{equation}\label{e:integral}
\int_{[\Gamma_h]}e^{\mathcal F_{}}\omega_h
\end{equation}
is well-defined and absolutely convergent,
therefore differentiable in $h$.
This follows, as in a proof of F.~Pham~\cite[Appendix A5, Lemma (i)]{Pham:VanishingHom}, from estimates of Herrera 
\cite[A. Theorem 2.1(c)]{Herrera} on the norms of currents associated to locally closed semi-analytic 
subsets in real analytic manifolds.

\vskip .1cm
We will always assume that $0\in\mathcal O$, as we want $[\Gamma_h]$
to be equivalent 
to the translate $[h\cdot \Gamma_0]$
defined using Section~\ref{s:translation}, for small $h$.

We note that in certain cases  we can start with a  cycle $[\Gamma_0]$ in $Z_0$ 
and extend it automatically 
to a family over all of $\mathfrak h_\R$ with the above decay properties.
Namely, suppose $[\Gamma_0]$ has the property that in any 
non-compact direction of $\Gamma_0$
we have ${\operatorname {Re}}(\mathcal F_0)\to -\infty$, and all  
individual summands ${\operatorname{Re}}(f_i^*(\bar u))$ and  
${\operatorname {Re}}(e_i^*(u))$ are bounded from above.
This means that no summand of $\operatorname{Re}(\mathcal F_0)$ 
can tend to $+\infty$. In that case we can 
use the translation action  to define
$$
\Gamma_h:=h\cdot \Gamma_0=\{u \bar u\inv  e^h\in Z_h \  |\ u\bar u\inv\in \Gamma_0\}, 
$$
for $h\in \mathfrak h_{\R}$. The claim that $[\Gamma_h]$ again has the same decay behavior for 
$\operatorname{Re}(\mathcal F_h)$ as $[\Gamma_0]$ for $\operatorname{Re}(\mathcal F_0)$ follows 
 from the simple observation that
 $$
 \mathcal F_h(u\bar u\inv e^h)=\frac 1z\left(\sum_{i\in I} e_i^*(u)+\sum_{i\in I} f_i^*(e^{-h}\bar u e^h)\right)=
 \frac 1z\left(\sum_{i\in I} e_i^*(u)+\sum_{i\in I} e^{\alpha_i(h)}f_i^*(\bar u)\right), 
 $$
 and that $e^{\alpha_i(h)}>0$ (since $h\in\mathfrak h_\R$).

 For example the families of cycles $\Gamma^{(1)}$ and $\Gamma^{(w_0)}$ are obtained
 from $[\Gamma^{(1)}_{0}]$ and $[\Gamma^{(w_0)}_0]$ in this way.

\vskip .1cm

\begin{rem}
An alternative formal setting for integration cycles is 
the $N$-th rapid decay homology group associated 
to the irregular  rank one connection, $\nabla(f)=df- f\  d\mathcal F_h(\ ;z)$, 
defined
by $\mathcal F_{h}$ on the structure sheaf of $\mathcal R_{1,w_0}$ .
This homology group was 
defined by Bloch and Esnault
in dimension $1$ and more recently generalized by Hien~\cite{Hien:periods} 
to arbitrary dimension using work of T.~Mochizuki~\cite{Mo:Kashiwara}. 
The relevant to us case of exponential connections is treated in the earlier work 
of Hien and Roucairol~\cite{HienRoucairol:GaussManin}.

Another approach to constructing integration cycles, following Givental \cite{Giv:MSFlag}, is 
recalled in Section~\ref{s:pos}.
\end{rem}

\section{Statement of the main theorem}
\label{s:mainthm}

Let $\mathcal O$ be a connected open
subset  either of $\mathfrak h_\R$ or of $\mathfrak h$ that contains $0$.
Let $\Gamma=([\Gamma_h])_{h\in \mathcal O}$ be a family of real, possibly non-compact,
oriented $N$-dimensional semianalytic cycles  in $Z_h$, as described in 
Section \ref{s:gencontours}. 

We fix a reduced expression $\mathbf i$ of $w_0$ and let $\omega_h=\omega_{\mathbf i,h}$ 
be the $N$-form on $Z_h$ defined in Section~\ref{s:omegah}.  
Then let
\begin{equation}\label{e:Sagain}
S_{\Gamma}(h,z):=\int_{[\Gamma_{h}]}  
e^{\mathcal F}\ \omega_h,
\end{equation}
for $h\in \mathcal O$ and $z\in\R_{>0}$. 
For example 
\begin{equation}\label{e:S+-}
S_{\Gamma^{(1)}}(h,z)=\int_{[\Gamma_{\mathbf i, h}^{(1)}]}  
e^{\mathcal F}\ \omega_{\mathbf i, h} \quad \text{and}\quad 
S_{\Gamma^{(w_0)}}(h,z)=\int_{[\Gamma^{(w_0)}_{\mathbf i, h}]}  
e^{\mathcal F}\ \omega_{\mathbf i, h},
\end{equation}
using the integration cycles defined in Section~\ref{s:contours}.

In general, the sign of $S_{\Gamma}(h,z)$ depends
on the reduced expression $\mathbf i$ used to define
$\omega_h$. However the special solutions $S_{\Gamma^{(1)}}$ 
and $S_{\Gamma^{(w_0)}}$, 
where we have chosen the orientation of
the integration cycle concurrently, are independent of $\mathbf i$. 
Note that $S_{\Gamma^{(1)}}(h,z)$ extends to a global holomorphic function on
 $\mathfrak h_\C\x \C^*$.

The following result was conjectured in \cite{Rie:MSgen}.
 
\begin{thm}\label{t:main}
The integrals \eqref{e:Sagain} are solutions to the 
quantum Toda lattice. In particular they are annihilated
by the quantum Toda Hamiltonian \eqref{e:QTodaHamiltonian}.
\end{thm}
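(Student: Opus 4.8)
The plan is to realize the integrals \eqref{e:Sagain} as matrix coefficients of a suitable pair of Whittaker modules with zero infinitesimal character, and then invoke Kostant's theory (Section~\ref{s:qToda}) to conclude that they are annihilated by the quantum Toda Hamiltonians. Concretely, I would first reinterpret $e^{\mathcal F}$ as the restriction of the trivial Whittaker function $\mathcal W_0$ (Remark~\ref{r:Whittaker}) associated with the characters $\chi_+^{(i)}=\tfrac1z$, $\chi_-^{(i)}=-\tfrac1z$, and then use the transformation formulas for $\omega$ from Proposition~\ref{p:transformations} and Remark~\ref{r:transformations} to identify how the integrand $e^{\mathcal F}\,\omega_h$ behaves under the $\mathcal U(\mathfrak g)$-action on $C^\infty(X_{\mathcal O})$ defined in Section~\ref{s:Whittaker}. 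The key point is that integration against the cycle $[\Gamma_h]$ — which by the decay hypotheses of Section~\ref{s:gencontours} kills all boundary terms — turns the infinitesimal left-translation action into a well-defined operation, so $S_\Gamma(h,z)$ becomes (a component of) a Whittaker vector matrix coefficient.

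The heart of the argument, following the outline in the introduction (``the construction of two particular dually paired Whittaker modules with zero infinitesimal character, defined on a space of functions on the mirror, and the computation of their Whittaker vectors''), is to build these two modules explicitly. I would take one module $M$ to be generated, inside an appropriate space of functions (or distributions) on $Z$, by the function $e^{\mathcal F}$ under the $\mathfrak u_+$-action twisted by $\chi_+$, and the dual module $M^\vee$ similarly using $\chi_-$ and the form $\omega_h$ (which accounts for the integration pairing and the $\mathfrak u_-$-covariance). One must check that both are genuine Whittaker modules in Kostant's sense, that the natural pairing between them is the integration pairing $\langle f, g\rangle = \int_{[\Gamma_h]} f g\,\omega_h$, and — crucially — that the infinitesimal character is zero, i.e.\ that $\gamma^{-1}(\phi_i)$ acts by $0$. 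The vanishing of the infinitesimal character is where I expect the real content to lie: it should follow from the fact that $Z_h$ sits inside $B_-\dot w_0$ with the ``$g\cdot B_+ = B_-$'' condition, which is precisely the geometric shadow of the degenerate-leaf condition $\Sigma_i = 0$ appearing in Kim's and Peterson's presentations (Section~\ref{s:Toda}); the earlier result of \cite{Rie:MSgen} recovering $qH^*$ with quantum parameters inverted is the classical limit of this statement, so the present theorem is its quantization.

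Once the two modules and their pairing are in place, I would compute the Whittaker vectors: the relevant matrix coefficient of Kostant's construction is, on one side, the highest-weight-type vector whose push-forward to $C^\infty(\mathcal O)$ is $e^{-\rho} S_\Gamma$ (the $e^{-\rho}$ twist being dictated by item~(2) of Section~\ref{s:Whittaker}, where $c_2\cdot f = e^{\rho}(\tfrac12\Delta + \sum\chi_+^{(i)}\chi_-^{(i)}e^{\alpha_i})(e^{-\rho}f)$, and $\chi_+^{(i)}\chi_-^{(i)} = -\tfrac1{z^2}$ reproduces exactly the potential term of $\mathcal H_G$ in \eqref{e:QTodaHamiltonian}). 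Kostant's theorem then says such a matrix coefficient is annihilated by $c - \gamma(c)(0)$ for every $c\in\mathcal Z(\mathfrak g)$; since the infinitesimal character is zero this gives annihilation by $c_2$ and by all higher Casimirs, hence $S_\Gamma(h,z)$ solves the full quantum Toda lattice and in particular $\mathcal H_G\, S_\Gamma = 0$. The main obstacle, as indicated, is verifying the Whittaker-module structure and the zero-infinitesimal-character property cleanly in the function-theoretic (as opposed to Verma-module) setting, and making sure the integration cycle genuinely makes the formal $\mathfrak u_\pm$-covariant relations into honest identities after integrating — this is exactly what the decay conditions of Section~\ref{s:gencontours} and the currents estimates cited there are designed to guarantee, so the two families $\Gamma^{(1)}$ and $\Gamma^{(w_0)}$ (and any $\Gamma$ satisfying those conditions) all qualify.
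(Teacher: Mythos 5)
Your overall architecture is the same as the paper's: realize $S_\Gamma$ as a matrix coefficient built from two dually paired Whittaker modules with zero infinitesimal character, invoke Kostant's theory to get annihilation by all Casimirs, use item (2) of Section~\ref{s:Whittaker} with $\chi_+^{(i)}\chi_-^{(i)}=-\tfrac1{z^2}$ and the $e^{-\rho}$ twist, and use the decay conditions of Section~\ref{s:gencontours} (Stokes, resp.\ the limit Stokes formula of Hien--Roucairol) to make the formal covariance honest after integration. However, two of your central steps, as stated, would fail. First, the zero infinitesimal character does \emph{not} come from the condition $g\cdot B_+=B_-$ being a ``geometric shadow'' of $\Sigma_i=0$; that condition merely cuts out the mirror fiber, and the Peterson/critical-locus picture plays no role in the proof of Theorem~\ref{t:main}. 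In the paper the $\mathcal U(\mathfrak g)$-action on $\mathcal Hol(Z_0)$ is not the naive left-translation action on functions: it is transported from $M_{-\rho}$, the space of holomorphic sections of $L_{-\rho}|_{\mathcal R_{1,w_0}}$, i.e.\ functions $\tilde f$ on $(U_+\cap B_-\dot w_0B_-)B_-$ with $\tilde f(gb)=\tilde f(g)\rho(b)$, and it is exactly this $-\rho$-twist which forces the infinitesimal character to be zero (see \cite[Proposition~5.1]{Kos:Whittaker}). A module generated by $e^{\mathcal F}$ under an untwisted action would not have zero infinitesimal character, and your Casimir argument would then yield an eigenfunction equation with the wrong, nonzero, constant rather than $\mathcal H_G S_\Gamma=0$.

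Second, with the $-\rho$-twisted module structure your proposed pairing $\langle f,g\rangle=\int_{[\Gamma_0]}fg\,\omega$ is not $\mathfrak g$-invariant: each factor carries a $\rho$-twist under the group action, while the plain form $\omega$ only obeys \eqref{e:trans}. The paper pairs instead against $\omega_{GKLO}(uB_-)=\left<u\cdot v^-_{\rho},v^+_{\rho}\right>\omega(uB_-)$, whose transformation law \eqref{e:GKLOtrans} exactly compensates the two $\rho$-twists (identity \eqref{e:twist}), and only then does Stokes give $\left<X\cdot\phi,\psi\right>+\left<\phi,X\cdot\psi\right>=0$ (Proposition~\ref{p:pairing}). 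Correspondingly, the $\mathfrak u_-$-Whittaker vector is not $e^{\bar\chi(\bar u)}$ alone but $\psi_-(u\bar u^{-1})=\left<u^{-1}\cdot v_\rho^-,v_\rho^+\right>^{-1}e^{\bar\chi(\bar u)}$ (Lemma~\ref{p:Whittaker}), and it is the product $\psi_-\,\psi_+\,\omega_{GKLO}$ that reproduces $e^{\mathcal F}\omega$, so that $S_\Gamma(h,z)=e^{-\rho(h)}\Psi_-^{[\Gamma_0]}(e^{-h}\cdot\psi_+)$ after using the invariance of the pairing to move the conjugation by $e^h$ from $u$ to $\bar u$ and the translation action of Section~\ref{s:translation} to pass from $[\Gamma_0]$ to $[\Gamma_h]$. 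Without the GKLO twist and the corrected $\psi_-$, neither the invariance of your pairing nor the identification of the integral with a Whittaker matrix coefficient holds; these are the two concrete repairs your proposal needs.
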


\section{A $\mathcal U(\mathfrak g)$-module structure on $\mathcal Hol(Z_0)$.}\label{s:modulestructure}

We consider the restriction of the complex 
line bundle $L_{-\rho}=G\x_{B_-}\C_{-\rho}$ to the intersection of
opposite big cells $\mathcal R_{1,w_0}$. 
Since $\mathcal
R_{1,w_0}$ is open in $G/B_-$, the representation of $G$
on the space of sections 
induces a representation of 
$\mathfrak g$ on $\Gamma_{hol}(L_{-\rho}|_{\mathcal R_{1,w_0}})$,
the space of holomorphic sections of the restricted line bundle.
Moreover, since $\mathcal R_{1,w_0}$ is preserved by $T$,
we have a corresponding representation of 
$T$ on 
$\Gamma_{hol}(L_{-\rho}|_{\mathcal R_{1,w_0}})$ which is compatible with
the $\mathfrak g$-module structure.
Explicitly, let us set
\begin{multline*}
M_{-\rho}:=\Gamma_{hol}(L_{-\rho}|_{\mathcal R_{1,w_0}})\\=
\{\tilde f~: (U_+\cap B_-\dot w_0 B_-)B_-\to \C \ | \ \tilde f\ {\rm holomorphic\ }, \tilde f(gb)=\tilde f(g)\rho(b),\ \forall b\in B_- \}.
\end{multline*}
The actions of $\mathfrak g$ and $T$ on $M_{-\rho}$ are given by 
\begin{eqnarray}
(X\cdot \tilde f)(g)&:=&\left .\frac{d}{ds}\right|_{s=0} \tilde f(\exp(-sX)g),\\ 
t\cdot \tilde f(g)&:=&\tilde f(t\inv g), 
\end{eqnarray}
for $X\in \mathfrak g$ and $t\in T$. Note that the $\mathcal U(\mathfrak g)$-module
$M_{-\rho}$ has zero infinitesimal character, see \cite[Proposition~5.1]{Kos:Whittaker}.

The restriction of $\tilde f\in M_{-\rho}$ to $U_+\cap B_-\dot w_0 B_-$ defines an 
isomorphism,
\begin{eqnarray*}\label{e:MtoR}
M_{-\rho}&\overset \sim\To &\mathcal Hol(\mathcal R_{1,w_0}),\\
\tilde f\ \ & \mapsto &\left (f: u\cdot B_-\mapsto \tilde f(u)\right), \qquad\text{where }\ u\in U_+\cap B_-\dot w_0 B_-,
\end{eqnarray*}  
identifying $M_{-\rho}$ with holomorphic functions on $\mathcal R_{1,w_0}$. The actions of $\mathfrak g$ and $T$ on $M_{-\rho}$ thereby carry over to representations on $\mathcal Hol(\mathcal R_{1,w_0})$.

Consider now the zero fiber, $Z_0$, of our mirror family. We obtain a $\mathfrak g$-module 
and compatible $T$-module structure on $\mathcal Hol(Z_0)$ 
via the isomorphism $\beta_0: Z_0 \To \mathcal  R_{1,w_0}$ from \eqref{e:beta0}.
By construction, this representation of $\mathfrak g$ on $\mathcal Hol(Z_0)$ extends
to a representation of $\mathcal U(\mathfrak g)$ with zero infinitesimal character.

\subsection{A $\mathfrak u_+$-Whittaker vector}
Let $\chi:\mathfrak u_+\to\C$ be the $1$-dimensional representation 
defined by $\chi(e_i)=\frac 1 z$, for all $i\in I$, 
and consider the corresponding holomorphic character $e^{\chi}$ on $U_+$. 
Let $\psi_+\in \mathcal Hol(Z_0)$ be defined by 
$$
\psi_+(u \bar u\inv):= e^{\chi(u)},
$$
and let $\widetilde{\psi}_+\in M_{-\rho}$
denote the section  of $L_{-\rho}|_{\mathcal R_{1,w_0}}$ associated to $\psi_+$.
\begin{lem}
$\psi_+$ is a $\mathfrak u_+$-Whittaker vector in $\mathcal Hol(Z_0)$ with character $\chi$. That is,
$$
e_i\cdot \psi_+=\frac 1 z \psi_+, \qquad\text{ for all $i\in I$.}
$$ 
\end{lem}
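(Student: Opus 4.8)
The plan is to compute the action of $e_i$ on $\psi_+$ directly from the definition of the $\mathfrak g$-module structure on $\mathcal Hol(Z_0)$, transported from $M_{-\rho}$. First I would work on the level of the section $\widetilde\psi_+\in M_{-\rho}$, since there the group action is the transparent one: for $X\in\mathfrak g$ we have $(X\cdot\widetilde f)(g)=\frac{d}{ds}\big|_{s=0}\widetilde f(\exp(-sX)g)$. Taking $X=e_i$, so that $\exp(-se_i)=x_i(-s)$, I need to understand $\widetilde\psi_+\big(x_i(-s)\,g\big)$ for $g=u\in U_+\cap B_-\dot w_0B_-$ a representative of a point of $\mathcal R_{1,w_0}$. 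Since $x_i(-s)u\in U_+$ already, no $B_-$-correction is needed, and $\widetilde\psi_+(x_i(-s)u)=e^{\chi(x_i(-s)u)}$ by the very definition of $\psi_+$ pulled back through $\beta_0$.

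The key computational step is then the identity $\chi\big(x_i(-s)u\big)=\chi(x_i(-s))+\chi(u)=-\tfrac sz+\chi(u)$, which holds because $e^\chi$ is a \emph{character} of $U_+$ (equivalently $\chi$ is a Lie algebra homomorphism $\mathfrak u_+\to\C$, so it is additive under the group multiplication on $U_+$ in the sense that $e^\chi(u_1u_2)=e^\chi(u_1)e^\chi(u_2)$). Here I am using the abuse of notation from Section~\ref{s:Whittaker} whereby $\chi$ applied to a unipotent element means $\chi$ composed with $\log$. Differentiating, $\frac{d}{ds}\big|_{s=0}e^{-s/z}e^{\chi(u)}=-\tfrac1z e^{\chi(u)}=-\tfrac1z\widetilde\psi_+(u)$. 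This would give $e_i\cdot\widetilde\psi_+=-\tfrac1z\widetilde\psi_+$, which has the wrong sign; so I need to be careful about one sign convention. Either the module action uses $\exp(-sX)$ (as written in Section~\ref{s:modulestructure}, giving $x_i(-s)$ and hence the eigenvalue $-1/z$), in which case one should instead take $\chi(e_i)=-\tfrac1z$ — but the lemma states $\chi(e_i)=\tfrac1z$ — so the resolution must be that pulling back along $\beta_0$, where a point $u\bar u^{-1}\in Z_0$ maps to $u^{-1}\cdot B_-$, introduces an inversion: the function $\psi_+$ on $Z_0$ sends $u\bar u^{-1}\mapsto e^{\chi(u)}$, whereas the corresponding point of $\mathcal R_{1,w_0}$ is $u^{-1}\cdot B_-$. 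Thus $f\in\mathcal Hol(\mathcal R_{1,w_0})$ satisfies $f(u^{-1}\cdot B_-)=e^{\chi(u)}$, i.e. $f(v\cdot B_-)=e^{\chi(v^{-1})}=e^{-\chi(v)}$ for $v\in U_+\cap B_-\dot w_0B_-$. Redoing the computation with this $f$: $(e_i\cdot f)(v\cdot B_-)=\frac{d}{ds}\big|_0 f(x_i(-s)v\cdot B_-)=\frac{d}{ds}\big|_0 e^{-\chi(x_i(-s)v)}=\frac{d}{ds}\big|_0 e^{s/z}e^{-\chi(v)}=\tfrac1z f(v\cdot B_-)$, as desired.

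The main obstacle, therefore, is purely bookkeeping: tracking the inversion built into $\beta_0$ together with the $\exp(-sX)$ convention in the definition of the $\mathfrak g$-action, so that the two sign flips compose to give exactly the eigenvalue $+\tfrac1z$ claimed. Once the conventions are pinned down, the actual content is the one-line use of the fact that $e^\chi$ is a group character of $U_+$, so that left multiplication by $x_i(-s)$ multiplies $\psi_+$ by the scalar $e^{\mp s/z}$, with no lower-order corrections and no torus contribution (the torus factor of $x_i(-s)u\bar u^{-1}$ is trivial since $x_i(-s)u\in U_+$). I would also remark that the same argument applied to each $i\in I$ simultaneously shows $\psi_+$ is a genuine $\mathfrak u_+$-Whittaker vector for the character $\chi$, and that it is manifestly nonzero (it takes the value $1$ at $e\in Z_0$), so it is a nondegenerate Whittaker vector in the sense of Section~\ref{s:Whittaker}.
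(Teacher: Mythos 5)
Your corrected computation is exactly the paper's own proof: both come down to the inversion built into $\beta_0$ (so that $\widetilde\psi_+(u\inv)=e^{\chi(u)}$ and left multiplication by $\exp(-se_i)$ becomes right multiplication of $u$ by $x_i(s)$) together with the character property of $e^{\chi}$, giving $\widetilde\psi_+(\exp(-se_i)u\inv)=e^{s/z}\,e^{\chi(u)}$ and hence the eigenvalue $+\tfrac1z$, and your observation that no $B_-$-correction or torus factor intervenes because $x_i(-s)u\in U_+$ is the same point the paper uses implicitly. The only slip is in your final aside: the identity element does not lie in $Z_0$ (that would force $B_+=B_-$), so you cannot evaluate $\psi_+$ there; but this remark is not needed, since $\psi_+$ is an exponential and hence nowhere vanishing, and the lemma itself only asserts the eigenvector property, which your argument establishes.
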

\begin{proof}
From the definitions we see 
\begin{multline*}
(e_j\cdot \psi_+)(u \bar u\inv)=(e_j\cdot\widetilde{ \psi}_+)(u\inv)= \left .\frac d{ds}\right |_{s=0} \widetilde{\psi}_+(\exp(-s e_j) u\inv)\\=
\left .\frac d{ds}\right |_{s=0}\exp\left(\frac 1 z \sum_{i\in I} e_i^*(u \exp(s e_j))\right) =
\left .\frac d{ds}\right |_{s=0}\exp\left(\frac 1 z\left (s+\sum_{i\in I} e_i^*(u)\right)\right)\\
=\frac 1 z \psi_+(u\bar u\inv).
\end{multline*}
\end{proof}

\begin{defn}[\cite{Kos:Whittaker}]
A $\mathcal U(\mathfrak g)$-module with a cyclic Whittaker vector is
called a {\it Whittaker module}. 
\end{defn}

\begin{defn}
Let $V_+$ be the $\mathcal U(\mathfrak g)$-submodule
of $\mathcal Hol(Z_0)$ generated by $\psi_+$.
\end{defn}

By   \cite[Theorem 3.6.2]{Kos:Whittaker}  $V_+$ is an irreducible 
Whittaker module, 
see also \cite{SoergelMilicic:Whittaker}. Note that $V_+$ no longer
has an action of $T$.

\subsection{A $\mathfrak u_-$-Whittaker vector}

Let $\bar\chi:\mathfrak u_-\to\C$ be the $1$-dimensional representation 
defined by $\bar\chi(f_i)=\frac 1 z$, for all $i\in I$, 
and consider the corresponding holomorphic character $e^{\bar\chi}$ on $U_-$. 
Let $\psi_-\in\mathcal Hol(Z_0)$ be defined by 
$$
\psi_-(u\bar u\inv):=\frac{1}{\left<u\inv\cdot v_{\rho}^-,v_{\rho}^+\right>} e^{\bar\chi(\bar u)}.
$$
The functions $\psi_+$ and $\psi_-$ are  Lie-theoretic analogues of the functions introduced in terms of
Givental coordinates in \cite{GKLO:GaussGivental}. 

\begin{lem}\label{p:Whittaker} $\psi_-\in\mathcal Hol(Z_0)$  is a  $\mathfrak u_-$-Whittaker
vector  with character $\bar \chi$, That is,
$$
f_i\cdot\psi_-=\frac 1 z \psi_-.
$$
\end{lem}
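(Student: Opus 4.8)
The plan is to mimic the proof of the preceding lemma for $\psi_+$, but now working with the $f_i$-action instead of the $e_i$-action, which is complicated by the presence of the extremal-weight matrix coefficient $\left<u\inv\cdot v_\rho^-, v_\rho^+\right>$ in the denominator of $\psi_-$. First I would pass to the associated section $\widetilde\psi_- \in M_{-\rho}$, so that $(f_j\cdot\psi_-)(u\bar u\inv) = (f_j\cdot\widetilde\psi_-)(u\inv) = \left.\tfrac{d}{ds}\right|_{s=0}\widetilde\psi_-(\exp(-sf_j)u\inv)$. The key point is then to understand how $\exp(-sf_j)u\inv$ decomposes: for small $s$ we can write $\exp(-sf_j)u\inv = u'(s)\inv\, b(s)$ with $u'(s)\inv \in U_+\cap B_-\dot w_0 B_-$ and $b(s)\in B_-$, since the big-cell condition is open; here $u'(0) = u$ and $b(0) = e$.

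With this factorization, $\widetilde\psi_-(\exp(-sf_j)u\inv) = \widetilde\psi_-(u'(s)\inv)\,\rho(b(s))$ by the $B_-$-equivariance, so the computation reduces to differentiating $\widetilde\psi_-(u'(s)\inv)\rho(b(s))$ at $s=0$. Two ingredients are needed: first, the derivative of the $U_+$-part, governing $e_i^*(u'(s))$ and the matrix coefficient $\left<u'(s)\inv\cdot v_\rho^-, v_\rho^+\right>$; second, the derivative of $\rho(b(s))$. The cleanest route is to use that $\left<u\inv\cdot v_\rho^-, v_\rho^+\right> = \left<v_\rho^-, u\cdot v_\rho^+\right>$ up to the pairing conventions, and that the section $\widetilde\psi_-$ evaluated on $g$ equals $\left<g\inv\cdot v_\rho^-,v_\rho^+\right>\inv e^{\bar\chi(\cdot)}$ where the $U_-$-argument comes from the $U_+TU_-$ factorization of $g\inv$. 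Acting by $\exp(-sf_j)$ from the left on $u\inv$ — equivalently, tracking $g = \exp(sf_j)$ acting on the $v_\rho^-$ side — the matrix coefficient $\left<\exp(sf_j)u\cdot v_\rho^-, v_\rho^+\right>$ is essentially unchanged to first order in $s$ (since $f_j$ lowers weights and $v_\rho^+$ is highest weight, $\left<\exp(sf_j)u\cdot v_\rho^-,v_\rho^+\right>$ has vanishing $s$-derivative relative to the leading term, or contributes a term that cancels), while the $e^{\bar\chi}$ factor picks up exactly $e^{s/z}$ from the extra $f_j$ entering the $U_-$-component. Collecting, the $s$-derivative at $0$ yields $\tfrac1z\psi_-$.

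Concretely, I expect the identity to follow from a short matrix-coefficient manipulation: writing $u\inv = \bar u_1 e^{h_1} \bar u_2$ is not quite the right frame; instead one directly expands, for $g' = \exp(-sf_j)u\inv \in U_+TU_-$, the $U_-$-factor as $\exp(s f_j + O(s^2))$ times the old $U_-$-factor (which is trivial here since $u\inv\in U_+\cap B_-\dot w_0 B_-$ has a specific Gauss decomposition), so that $\sum_i f_i^*$ of the new $U_-$-part equals $\tfrac{s}{z} + (\text{old value})$, exactly parallel to the $\psi_+$ computation. Simultaneously, $\left<g'\cdot v_\rho^-, v_\rho^+\right>$ — after accounting that $g' \cdot v_\rho^- = \exp(-sf_j)u\inv\cdot v_\rho^-$ and that pairing against the highest weight vector $v_\rho^+$ annihilates the $f_j$-lowered contributions — is constant in $s$ to first order. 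Hence
\begin{multline*}
(f_j\cdot\psi_-)(u\bar u\inv) = \left.\frac{d}{ds}\right|_{s=0}\frac{1}{\left<\exp(-sf_j)u\inv\cdot v_\rho^-, v_\rho^+\right>}\exp\!\left(\frac1z\Big(s + \sum_{i\in I} f_i^*(\bar u)\Big)\right)\\ = \frac1z\,\frac{1}{\left<u\inv\cdot v_\rho^-, v_\rho^+\right>}e^{\bar\chi(\bar u)} = \frac1z\,\psi_-(u\bar u\inv).
\end{multline*}
The main obstacle is the careful bookkeeping of the Gauss decomposition of $\exp(-sf_j)u\inv$ and verifying that the matrix coefficient $\left<u\inv\cdot v_\rho^-,v_\rho^+\right>$ contributes nothing to the $s$-derivative — this is where one must use that $v_\rho^+$ is a highest weight vector (so $e_i\cdot v_\rho^+ = 0$) together with the precise relation between the left $U_+TU_-$-factorization and the action of $f_j$; once that vanishing is pinned down, the rest is the same one-line telescoping as in the $\psi_+$ case.
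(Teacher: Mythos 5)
Your overall strategy is the paper's: pass to $\widetilde\psi_-\in M_{-\rho}$, factor $\exp(-sf_j)u^{-1}$ into a $U_+$-part times a $B_-$-part, and use that pairing against the highest weight vector $v_\rho^+$ kills $f_j$-lowered contributions; your final displayed identity is also the correct one. But the step you yourself single out as the main obstacle is resolved incorrectly in the sketch. The $U_-$-argument of $e^{\bar\chi}$ is \emph{not} the $U_-$-factor of the $U_+TU_-$ (Gauss) factorization of $g'=\exp(-sf_j)u^{-1}$, as you assert: that factor equals $y_j\bigl(-s/(1+s\,e_j^*(u))\bigr)=\exp(-sf_j+O(s^2))$, so following your prescription literally gives $e^{-s/z}$ (wrong sign) and loses the factor $e^{\bar\chi(\bar u)}$ entirely — consistent with your own remark that the ``old'' Gauss $U_-$-factor of $u^{-1}$ is trivial, which contradicts the appearance of $\sum_i f_i^*(\bar u)$ in your display. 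The element that actually feeds into $\psi_-$ is the $\bar u$-datum of the mirror point, read off from the flag rather than from a Gauss decomposition of the group element: since $u^{-1}\cdot B_-=\bar u^{-1}\cdot B_+$, one has $\exp(-sf_j)u^{-1}\cdot B_-=(\bar u\, y_j(s))^{-1}\cdot B_+$, so the datum transforms as $\bar u\mapsto \bar u\, y_j(s)$ \emph{exactly}; this is precisely the content of the paper's auxiliary lemma ($u\,y_j(s)=b_{(s)}u_{(s)}$ with $\bar u_{(s)}=\bar u\,y_j(s)$), and it is what produces the $+s/z$ in the exponent.

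The second unresolved point is the factor $\rho(b(s))$ that you introduce from the $B_-$-equivariance and then never account for. It is not negligible: in the paper's bookkeeping $\rho(b_{(s)})=1+s\,e_j^*(u)$, and the matrix coefficient of the $U_+$-part is \emph{not} constant either, since $\left<u_{(s)}^{-1}\cdot v_\rho^-,v_\rho^+\right>=(1+s\,e_j^*(u))^{-1}\left<u^{-1}\cdot v_\rho^-,v_\rho^+\right>$; the lemma holds because these two factors cancel. Your ``cleanest route'' amounts to absorbing $\rho(b(s))$ into the coefficient by first proving $\widetilde\psi_-(g)=\left<g\cdot v_\rho^-,v_\rho^+\right>^{-1}e^{\bar\chi(\bar u'(g))}$, with $g$ itself (not $g^{-1}$, as you write) in the pairing and with $\bar u'(g)$ defined by $g\cdot B_-=\bar u'(g)^{-1}\cdot B_+$; with that corrected identity, $\left<\exp(-sf_j)u^{-1}\cdot v_\rho^-,v_\rho^+\right>$ is genuinely constant in $s$ by the highest-weight argument and your display follows. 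As written, however, both the mechanism producing the exponent $s+\sum_i f_i^*(\bar u)$ and the cancellation of the $\rho$-twist are mis-specified, so there is a genuine gap exactly at the step you flagged.
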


In the following lemma we collect some identities used in the proof of the 
Lemma~\ref{p:Whittaker}.

\begin{lem}\label{l:yi} Suppose $u\in U_+,\bar u\in U_- $ are  given such that
$u\bar u\inv$ lies in $Z_0$. Consider a fixed 
Chevalley generator $f_{i}$.
\begin{enumerate}
\item For $s\in \C$ such that $1+s e_{i}^*(u)\ne 0$ we have the identity
$$
u y_{i}(s)=b_{(s)}u_{(s)},
$$
where $b_{(s)}\in B_-$ and $u_{(s)}\in U_+$ are given explicitly by  
\begin{eqnarray*}
b_{(s)}&=&\left ({1+s e_{i}^*(u)}\right)^{\alpha_{i}^\vee} y_{i}\big (s(1+s e_{i}^*(u))\big)\\
u_{(s)}&=&y_{i}\big (-s(1+s e_{i}^*(u))\big)\, \left (\frac 1{1+s e_{i}^*(u)}\right)^{\hskip -.07cm\alpha_{i}^\vee} u\,  y_{i}(s).
\end{eqnarray*}
\item
Let $s$ be as above, and define $\bar u_{(s)}$ by 
$$
\bar u_{(s)}\inv\cdot B^+=u_{(s)}\inv\cdot B^-.
$$
Then $\bar u_{(s)}=\bar u\ y_{i}(s)$.
The element $u_{(s)}\bar u_{(s)}\inv$  in $Z_0$ is given by
$$
y_{i}\big (-s(1+s e_{i}^*(u))\big)\, \left (\frac 1{1+s e_{i}^*(u)}\right)^{\hskip -.07cm\alpha_{i}^\vee}  u\bar u\inv.
$$
\end{enumerate}
\end{lem}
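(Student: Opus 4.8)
The statement to prove is Lemma~\ref{l:yi}, which is a purely computational lemma about the factorization of $u\,y_i(s)$ with $u\in U_+$ into $B_-\cdot U_+$, and the consequence for the point $u_{(s)}\bar u_{(s)}\inv\in Z_0$. The plan is to reduce everything to the rank-one $SL_2$ computation attached to the node $i$, using the standard commutation identity between $y_i(s)$ and the simple-root $SL_2$, together with the fact that $e_j$ for $j\neq i$ commutes with $y_i(s)$ in the relevant sense. Concretely, for part~(1), first I would write $u\,y_i(s)$ and move $y_i(s)$ to the left past $u$; the only obstruction to commuting is the $x_i(\,\cdot\,)$-content of $u$, so the relevant relation is the $SL_2$ identity
\begin{equation*}
x_i(a)\,y_i(s)=y_i\!\left(\frac{s}{1+as}\right)\left(\frac{1}{1+as}\right)^{\alpha_i^\vee}x_i\!\left(\frac{a}{1+as}\right),
\end{equation*}
(valid when $1+as\neq 0$), which already appeared in the proof of Proposition~\ref{p:transformations}. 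Iterating this over the simple-reflection letters of $u$ equal to $i$, and using $a_{l_1}+\dots+a_{l_m}=e_i^*(u)$ exactly as in that earlier proof, one collects all the $y_i$- and coweight-factors to the left. This produces a factor $b_{(s)}\in B_-$ and leaves a leftover $U_+$-element; matching the accumulated $y_i$-parameter and the coweight exponent against the stated formulas for $b_{(s)}$ and $u_{(s)}$ is then a direct check. (Alternatively, and perhaps more cleanly, one can simply \emph{verify} the proposed identity $u\,y_i(s)=b_{(s)}u_{(s)}$ by substituting the given formulas and simplifying inside the rank-one $SL_2$, since $b_{(s)}$ is manifestly in $B_-$ and $u_{(s)}$ is manifestly in $U_+$; uniqueness of the $B_-U_+$ factorization on the big cell then finishes part~(1).)

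For part~(2): by definition $\bar u_{(s)}$ is characterized by $\bar u_{(s)}\inv\cdot B^+=u_{(s)}\inv\cdot B^-$, and the hypothesis $u\bar u\inv\in Z_0$ means $u\inv\cdot B^-=\bar u\inv\cdot B^+$. Applying $y_i(s)\inv$ on the right to $u_{(s)}\inv=y_i(s)\inv u\inv \bigl(\tfrac{1}{1+se_i^*(u)}\bigr)^{-\alpha_i^\vee}y_i\bigl(-s(1+se_i^*(u))\bigr)^{-1}\cdot(\text{stuff in }B_-)$ — i.e. reading part~(1) as $u_{(s)}=b_{(s)}\inv u\,y_i(s)$ — one gets $u_{(s)}\inv\cdot B^- = y_i(s)\inv u\inv\cdot B^- = y_i(s)\inv \bar u\inv\cdot B^+=(\bar u\,y_i(s))\inv\cdot B^+$, so $\bar u_{(s)}=\bar u\,y_i(s)$, as claimed. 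Then I would compute $u_{(s)}\bar u_{(s)}\inv = \bigl(b_{(s)}\inv u\,y_i(s)\bigr)\bigl(\bar u\,y_i(s)\bigr)\inv = b_{(s)}\inv\,u\,\bar u\inv$; substituting the explicit $b_{(s)}\inv = y_i\bigl(-s(1+se_i^*(u))\bigr)\bigl(\tfrac{1}{1+se_i^*(u)}\bigr)^{\alpha_i^\vee}$ gives exactly the displayed expression for $u_{(s)}\bar u_{(s)}\inv$.

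The only genuinely delicate point is bookkeeping in part~(1): keeping track of how the coweight factors $\bigl(\tfrac{1}{1+a_{l_j}s}\bigr)^{\alpha_i^\vee}$ conjugate the subsequent $x_i$-parameters as one sweeps $y_i(s)$ leftward, and checking that the telescoping collapses to a single factor governed only by $e_i^*(u)=\sum_j a_{l_j}$ rather than on the individual $a_{l_j}$. This is essentially the same telescoping already carried out in the proof of Proposition~\ref{p:transformations}, so I would reuse that computation rather than redo it; the cleanest writeup is probably the ``verify the given formulas directly plus uniqueness of the factorization'' route, which sidesteps the sweep entirely and makes part~(2) an immediate formal consequence.
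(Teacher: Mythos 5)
Your fallback route (sweeping $y_i(s)$ leftward through $u$) is the right idea --- it is essentially the computation the paper leaves as ``straightforward'' --- and your deduction of part (2) from part (1), namely $u_{(s)}\inv\cdot B^-=y_i(s)\inv u\inv\cdot B^-=y_i(s)\inv\bar u\inv\cdot B^+$, hence $\bar u_{(s)}=\bar u\,y_i(s)$ and $u_{(s)}\bar u_{(s)}\inv=b_{(s)}\inv u\bar u\inv$, is correct. However, the route you actually recommend for the writeup is circular. The stated formula for $u_{(s)}$ is literally $u_{(s)}=b_{(s)}\inv u\,y_i(s)$, so the identity $u\,y_i(s)=b_{(s)}u_{(s)}$ holds tautologically once you substitute; the entire content of part (1) is the membership $u_{(s)}\in U_+$. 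That membership is \emph{not} manifest: $u_{(s)}$ is presented as a product of a $U_-$-element, a torus element, a general $U_+$-element and another $U_-$-element, and it cannot be ``verified inside the rank-one $SL_2$'' because $u$ is an arbitrary element of $U_+$, not of the $SL_2$ attached to $i$. Uniqueness of the $B_-U_+$ factorization only helps after one has \emph{established} a factorization whose second factor lies in $U_+$, which is exactly the point at issue.

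Second, the rank-one identity you quote is wrong, and it is not the one appearing in the proof of Proposition~\ref{p:transformations} (that one treats $y_i(s)x_i(a)$). The reflected identity you need is $x_i(a)\,y_i(s)=y_i\!\left(\tfrac{s}{1+as}\right)(1+as)^{\alpha_i^\vee}x_i\!\left(\tfrac{a}{1+as}\right)$ for $1+as\neq0$; with your inverted torus factor $\left(\tfrac{1}{1+as}\right)^{\alpha_i^\vee}$ the sweep accumulates the inverse torus element and produces a $B_-$-part incompatible with the stated $b_{(s)}=(1+se_i^*(u))^{\alpha_i^\vee}y_i\big(s(1+se_i^*(u))\big)$. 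With the corrected identity your telescoping argument does go through; a way to avoid the bookkeeping entirely is to write $u=u'\,x_i(a)$ with $a=e_i^*(u)$ and $u'$ in the subgroup $U_+\cap\dot s_iU_+\dot s_i\inv$ generated by the root groups $U_\beta$ with $\beta>0$, $\beta\neq\alpha_i$; this subgroup is normalized by $T$ and by $y_i(\cdot)$, so one application of the rank-one identity followed by conjugating $u'$ past $y_i\!\left(\tfrac{s}{1+as}\right)(1+as)^{\alpha_i^\vee}$ gives $u\,y_i(s)=y_i\!\left(\tfrac{s}{1+as}\right)(1+as)^{\alpha_i^\vee}\,\tilde u'\,x_i\!\left(\tfrac{a}{1+as}\right)$ with $\tilde u'\in U_+$, and $y_i\!\left(\tfrac{s}{1+as}\right)(1+as)^{\alpha_i^\vee}=(1+as)^{\alpha_i^\vee}y_i\big(s(1+as)\big)=b_{(s)}$, proving part (1).
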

The proof of the Lemma is straightforward.

\begin{proof} [Proof of Lemma~\ref{p:Whittaker}]
To analyze the action of $f_{i}$ on $\psi_-$ we use Lemma~\ref{l:yi} with all its notations.
Let $\tilde\psi_-$ denote the element of $M_{-\rho}$
associated to $\psi_-$. 
\begin{multline*}
(f_i\cdot \psi_-) (u\bar u\inv)=( f_i\cdot \widetilde{\psi}_-) (u\inv)=\left .\frac d{ds}\right |_{s=0}
\widetilde{\psi}_-(\exp(-s f_i)u\inv)\\
=\left .\frac d{ds}\right |_{s=0}
\widetilde{\psi}_-(u_{(s)}\inv b_{(s)}\inv)=
\left .\frac d{ds}\right |_{s=0}
\widetilde{\psi}_-(u_{(s)}\inv )\rho(b_{(s)})\inv
=\left .\frac d{ds}\right |_{s=0}
\widetilde{\psi}_-(u_{(s)}\inv )\frac{1}{(1+s e_i^*(u))}.
\end{multline*}
Now
\begin{equation*}
\widetilde{\psi}_-(u_{(s)}\inv )={\psi}_-(u_{(s)} \bar u_{(s)}\inv)=\frac 1{\left<u_{(s)}\inv\cdot v_{\rho}^-,v_{\rho}^+\right>}
e^{\bar\chi (\bar u_{(s)})}=\frac{(1+s e_i^*(u))}{\left<u\inv\cdot v_{\rho}^-,v_{\rho}^+\right>} e^{\bar\chi (\bar u)+\frac 1 z s},
\end{equation*}
using the formulas for $u_{(s)}$ and $\bar u_{s}$ from Lemma~\ref{l:yi}. Therefore we get
$$
(f_i\cdot \psi_-)(u\bar u\inv)=
\left .\frac d{ds}\right |_{s=0}\frac 1{\left<u\inv\cdot v_{\rho}^-,v_{\rho}^+\right>} 
e^{\bar\chi (\bar u)+\frac 1 z s}=\frac 1z\psi_-(u\bar u\inv).
$$
\end{proof}

\begin{defn}
Let $V_-$ be the $\mathcal U(\mathfrak g)$ module in 
$\mathcal Hol(Z_0)$ generated by  $\psi_-$. 
This is another irreducible Whittaker module.
\end{defn}

\section{A Whittaker functional on $V_+$}
Consider again the holomorphic character $e^{\bar\chi}$ on $U_-$ given by
$$
\bar u\mapsto e^{\bar\chi(\bar u)}=e^{\frac 1z\sum f_i^*(\bar u)}.
$$
Suppose we have fixed a cycle $[\Gamma_0]$ in $Z_0$ as in Section~\ref{s:gencontours}. 
Then we define a linear map $\Psi_-^{[\Gamma_0]}:V_+\to \C$ by 
$$
\Psi^{[\Gamma_0]}_-(f):=\int_{[\Gamma_0]}e^{\bar\chi(\bar u)}\, f\ \omega.
$$ 
 \begin{rem}\label{r:convergence}
Recall how the integration cycles were chosen in Section~\ref{s:gencontours} 
for $e^\mathcal F$ to have exponential decay in any non-compact direction. 
 Now $\Psi_-^{[\Gamma_0]}(\psi_+)=\int_{[\Gamma_0]} e^{\bar \chi(\bar u)}\psi_+\ \omega=
 \int_{[\Gamma_0]} e^{\mathcal F}\ \omega$. 
Since repeated actions by 
elements of $\mathfrak g$ on the $\psi_+$ produce only rational amplitude factors
which do not affect convergence, $\Psi^{[\Gamma_0]}_-$ is defined on all of $V_+$. 
 \end{rem} 

Let us consider $\Psi^{[\Gamma_0]}_-$ as an element of the 
$\mathcal U(\mathfrak g)$-module $V_+^*$ dual to $V_+$.

\begin{prop} \label{t:Whittaker} 
For all $i\in I$, we have
$$
f_i\cdot \Psi_-=\frac 1z \Psi_-.
$$
That is, $\Psi_-$ is a $\mathfrak u_-$-Whittaker vector in $V_+^*$.
\end{prop}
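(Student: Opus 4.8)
The strategy is to prove the dual statement by moving the action of $f_i$ off the integrand and onto the cycle, and then invoking the decay hypotheses on $[\Gamma_0]$ to kill the resulting boundary term. Concretely, for $f\in V_+$ the element $f_i\cdot\Psi_-$ of $V_+^*$ evaluated on $f$ is by definition $-\Psi_-(f_i\cdot f)$ (with the standard sign convention for the contragredient action of $\mathcal U(\mathfrak g)$ on $V_+^*$). So one must compute
\[
\Psi_-^{[\Gamma_0]}(f_i\cdot f)=\int_{[\Gamma_0]}e^{\bar\chi(\bar u)}\,(f_i\cdot f)\,\omega.
\]
The action of $f_i$ on $f\in\mathcal{H}ol(Z_0)\cong M_{-\rho}$ is the infinitesimal left-translation vector field $\xi_i$ on $\mathcal R_{1,w_0}$ associated to $f_i$ (twisted by the $L_{-\rho}$-weight). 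The plan is to integrate by parts with respect to this vector field: use the Lie-derivative identity $\mathcal L_{\xi_i}(e^{\bar\chi}f\,\omega)=d\,\iota_{\xi_i}(e^{\bar\chi}f\,\omega)$ (since $\omega$ is an $N$-form, $d(e^{\bar\chi}f\,\omega)=0$), so that
\[
\int_{[\Gamma_0]}\mathcal L_{\xi_i}(e^{\bar\chi}f\,\omega)=\int_{\partial[\Gamma_0]}\iota_{\xi_i}(e^{\bar\chi}f\,\omega).
\]
Then expand the left side using the Leibniz rule: $\mathcal L_{\xi_i}(e^{\bar\chi}f\,\omega)=(\xi_i\cdot f)e^{\bar\chi}\omega+f\,\mathcal L_{\xi_i}(e^{\bar\chi}\omega)$.

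**Key steps in order.** First I would make precise the vector field $\xi_i$ generating the $\mathcal{H}ol(Z_0)$-action of $f_i$, recording that under the identification with functions on $\mathcal R_{1,w_0}$ (via $\beta_0$ and restriction to $U_+\cap B_-\dot w_0 B_-$) it is the velocity field of $s\mapsto \kappa_{y_i(-s)}$ up to the $L_{-\rho}$-cocycle correction; the explicit factorization $uy_i(s)=b_{(s)}u_{(s)}$ from Lemma~\ref{l:yi} is exactly what encodes $\xi_i$ in the $a$-coordinates. Second, I would compute $\mathcal L_{\xi_i}(e^{\bar\chi}\omega)$: the term $\xi_i\cdot e^{\bar\chi}$ contributes $\tfrac1z e^{\bar\chi}$ because $\bar u_{(s)}=\bar u\,y_i(s)$ (again Lemma~\ref{l:yi}), so differentiating $e^{\bar\chi(\bar u_{(s)})}=e^{\bar\chi(\bar u)+\frac1z s}$ in $s$ gives the factor $\tfrac1z$; the term $\mathcal L_{\xi_i}\omega$ is governed by the transformation law $\kappa_{y_i(s)}^*\omega=(1+e_i^*(u)s)^{-1}\omega$ from Remark~\ref{r:transformations}(1), whose $s$-derivative at $0$ is $-e_i^*(u)\,\omega$; combined with the $L_{-\rho}$-cocycle factor $\rho(b_{(s)})^{-1}=(1+se_i^*(u))^{-1}$ (whose derivative is also $-e_i^*(u)$), these rational correction terms should cancel or combine into something that is itself a total derivative or otherwise controlled. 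I would carry this bookkeeping through so that
\[
\int_{[\Gamma_0]}(\xi_i\cdot f)\,e^{\bar\chi}\omega
=\tfrac1z\int_{[\Gamma_0]}f\,e^{\bar\chi}\omega \;+\;(\text{boundary term}),
\]
i.e. $\Psi_-(f_i\cdot f)=-\tfrac1z\Psi_-(f)+(\text{boundary})$, matching the claimed $f_i\cdot\Psi_-=\tfrac1z\Psi_-$ once the boundary term vanishes. (The opposite sign relative to the naive $-\Psi_-(f_i\cdot f)$ is absorbed by whether $\xi_i$ corresponds to $y_i(s)$ or $y_i(-s)$; I would fix conventions so the signs come out right.)

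**Main obstacle.** The serious point is the vanishing of the boundary contribution $\int_{\partial[\Gamma_0]}\iota_{\xi_i}(e^{\bar\chi}f\,\omega)$. The cycle $[\Gamma_0]$ is non-compact in general, so "$\partial$" must be read as a limit over an exhausting family of compact pieces, and one needs $e^{\bar\chi}f\,\omega$ — equivalently $e^{\mathcal F}$ times a rational amplitude — to decay fast enough along the non-compact directions of $\Gamma_0$ that the flux through the receding boundary tends to $0$. This is precisely the role of the hypotheses imposed in Section~\ref{s:gencontours} (that $\operatorname{Re}(\mathcal F_h)\to-\infty$ in every non-compact direction, with each summand bounded above), together with the absolute convergence already noted there, in the spirit of Pham's argument via Herrera's current estimates. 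So the plan is to argue: $\xi_i$ has at worst polynomial growth in the $a$-coordinates on $\Gamma_0$, the amplitude factors produced by the $\mathfrak g$-action on $\psi_+$ are rational (Remark~\ref{r:convergence}), and $e^{\mathcal F}$ provides super-polynomial exponential decay, so Stokes' theorem applies with no boundary term. The secondary, purely computational, obstacle is keeping straight the several $(1+se_i^*(u))^{\pm1}$-type factors (from $b_{(s)}$, from $\rho(b_{(s)})$, from the $\omega$-transformation, from $u_{(s)}$) so that they genuinely reduce to the single factor $\tfrac1z$; but this is essentially a repackaging of the computation already done in the proof of Lemma~\ref{p:Whittaker}, now with the extra variation of the cycle rather than of the point.
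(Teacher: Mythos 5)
Your plan is correct, and the bookkeeping you leave open does close up exactly: with the flow on $Z_0$ that corresponds under $\beta_0$ to $\kappa_{y_i(-s)}$, Lemma~\ref{l:yi} gives $(f_i\cdot\psi)=\mathcal{L}_{\xi}\psi-e_i^*(u)\,\psi$ (the $-e_i^*(u)$ coming from $\rho(b_{(s)})^{-1}$), while $\mathcal{L}_{\xi}\,\omega=-e_i^*(u)\,\omega$ (note the two sign flips, from using $y_i(-s)$ and from $e_i^*(u^{-1})=-e_i^*(u)$ at the point $u^{-1}B_-$, compensate) and $\mathcal{L}_{\xi}e^{\bar\chi}=\tfrac1z e^{\bar\chi}$ since $\bar u_{(s)}=\bar u\,y_i(s)$; hence $e^{\bar\chi}(f_i\cdot\psi)\,\omega=\mathcal{L}_{\xi}\bigl(e^{\bar\chi}\psi\,\omega\bigr)-\tfrac1z e^{\bar\chi}\psi\,\omega$, and Stokes yields $\Psi_-(f_i\cdot\psi)=-\tfrac1z\Psi_-(\psi)$ as needed. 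This is, however, organized differently from the paper. The paper never integrates by parts against $e^{\bar\chi}\omega$ directly: it introduces the second Whittaker module $V_-$ with vector $\psi_-$ (Lemma~\ref{p:Whittaker}), notes that $\psi_-\,\omega_{GKLO}=e^{\bar\chi}\,\omega$ so that $\Psi_-^{[\Gamma_0]}=\langle\psi_-,\cdot\rangle_{[\Gamma_0]}$, and proves once that the pairing $\langle\phi,\psi\rangle_{[\Gamma_0]}=\int_{[\Gamma_0]}\phi\,\psi\,\omega_{GKLO}$ is invariant under all of $\mathfrak g$ (Proposition~\ref{p:pairing}), the $U_+$-invariance and transformation law of $\omega_{GKLO}$ absorbing the $-\rho$ cocycles uniformly in $X$; the proposition is then a one-line consequence. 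The detour buys full $\mathfrak g$-equivariance rather than just the $f_i$-identity, and that extra strength is genuinely used later: the proof of Theorem~\ref{t:main} invokes Proposition~\ref{p:pairing}(2) again to move the $e^h$-conjugation from $u$ to $\bar u$. Your version is more self-contained for this one statement (no $V_-$, $\psi_-$, or $\omega_{GKLO}$), essentially merging the computation of Lemma~\ref{p:Whittaker} with the Stokes step. One caution: for non-compact $\Gamma_0$ the assertion that exponential decay beats polynomial growth is not by itself a proof that the receding boundary flux vanishes for semianalytic chains; the paper settles precisely this point by passing to the closure in a compactification and citing the limit Stokes formula of Hien and Roucairol \cite{HienRoucairol:GaussManin}, and your argument should appeal to the same result rather than to the heuristic estimate.
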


\subsection{A bilinear pairing}
To prove Proposition~\ref{t:Whittaker} we will construct a pairing between the 
Whittaker modules $V_+$ and $V_-$, such that 
the Whittaker vector $\psi_-$ becomes identified
with the Whittaker functional $\Psi_-$ from Theorem~\ref{t:Whittaker}. 
This pairing essentially generalizes one introduced in \cite{GKLO:GaussGivental}.

Let $[\Gamma_0]$ be
a middle-dimensional cycle in $Z_0$ as above. 
Consider integrals of the form
$$
\int_{[\Gamma_0]}\phi\ \psi\  \  \omega_{GKLO},
 $$
 where $\phi,\psi$ are holomorphic functions
 on $Z_0$ and $\omega_{GKLO}$
 is the volume form from \eqref{e:wGKLO}, pulled back
 to $Z_0$ via the isomorphism $\beta_0:Z_0\to\mathcal R_{1,w_0}$.

\begin{prop}\label{p:pairing}\label{c:invariance}
Let $\phi\in V_-$ and $\psi\in V_+$.
\begin{enumerate}
\item The formula
$$
\left<\phi,\psi\right>_{[\Gamma_0]}:=\int_{[\Gamma_0]}\phi\ \psi \  \omega_{GKLO}
$$
 defines a bilinear pairing, $\left<\phi,\psi\right>_{[\Gamma_0]}:V_-\x V_+\to \C$. 
\item For any $X\in\mathfrak g$,
\begin{equation*}
\left< X\cdot \phi,\psi\right>_{[\Gamma_0]} + \left<\phi, X\cdot \psi\right>_{[\Gamma_0]}=0.
\end{equation*}
\end{enumerate}
\end{prop}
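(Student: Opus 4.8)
The plan is to prove both parts at once by reducing everything to the transformation behaviour of $\omega_{GKLO}$ established in Proposition~\ref{p:transformations} (equivalently Remark~\ref{r:transformations}(2)), together with Stokes' theorem. For part (1), the only thing to check beyond convergence (which follows exactly as in Remark~\ref{r:convergence}, since elements of $V_-$ and $V_+$ differ from $\psi_-,\psi_+$ only by rational amplitude factors and the extra amplitude $\left<u\inv\cdot v^-_\rho,v^+_\rho\right>\inv$ in $\psi_-$ is bounded on $[\Gamma_0]$) is that the integral is finite and bilinear, which is immediate from linearity of the integral. So the real content is part (2).

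For part (2), I would first reduce to the case where $X$ is one of the Chevalley generators $e_i$, $f_i$, or an element $h\in\mathfrak h$, since these generate $\mathfrak g$ and the identity $\left<X\cdot\phi,\psi\right>+\left<\phi,X\cdot\psi\right>=0$ is linear in $X$ (and closed under brackets: if it holds for $X$ and $Y$ it holds for $[X,Y]$, by the usual Leibniz argument). Next I would express the $\mathfrak g$-action concretely: for $X\in\mathfrak g_\R$ and a point $uB_-\in\mathcal R_{1,w_0}$, the derivative $\frac{d}{ds}|_{s=0}$ of the pullback by $\kappa_{\exp(-sX)}$ of the section picture. The key computation is then: for a one-parameter subgroup $g_s=\exp(sX)$,
\begin{equation*}
\left.\frac{d}{ds}\right|_{s=0}\int_{[\Gamma_0]}\kappa_{g_s}^*\bigl(\phi\,\psi\,\omega_{GKLO}\bigr)=\int_{[\Gamma_0]}\mathcal L_{X}\bigl(\phi\,\psi\,\omega_{GKLO}\bigr),
\end{equation*}
where $\mathcal L_X$ is the Lie derivative along the vector field generating the left-translation flow. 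Since $\phi\,\psi\,\omega_{GKLO}$ is a top-degree form, $\mathcal L_X(\phi\psi\,\omega_{GKLO})=d\,\iota_X(\phi\psi\,\omega_{GKLO})$ is exact, so by Stokes the right-hand side vanishes provided the cycle has no boundary contribution — which is exactly the decay hypothesis on $[\Gamma_0]$ built into Section~\ref{s:gencontours} (the integrand and all its $\mathfrak g$-translates decay in every non-compact direction). On the other hand, writing $\kappa_{g_s}^*(\phi\psi\,\omega_{GKLO})=(\kappa_{g_s}^*\phi)(\kappa_{g_s}^*\psi)(\kappa_{g_s}^*\omega_{GKLO})$ and differentiating by the product rule gives three terms: the first two are, by definition of the module structure, $\bigl((-X)\cdot\phi\bigr)\psi\,\omega_{GKLO}$ and $\phi\bigl((-X)\cdot\psi\bigr)\omega_{GKLO}$ — note the sign, since the action uses $\exp(-sX)$ — and the third is $\phi\psi\cdot\frac{d}{ds}|_{s=0}\kappa_{g_s}^*\omega_{GKLO}$.

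The crux is therefore to show this third term vanishes, i.e.\ that $\frac{d}{ds}|_{s=0}\kappa_{\exp(sX)}^*\omega_{GKLO}=0$ for $X\in\mathfrak u_+$, so that the two surviving terms give $-\left<X\cdot\phi,\psi\right>-\left<\phi,X\cdot\psi\right>=0$. For $X=e_i$ this is precisely Remark~\ref{r:transformations}(2): $\omega_{GKLO}$ is $U_+$-invariant, so $\kappa_{x_i(s)}^*\omega_{GKLO}=\omega_{GKLO}$ and the $s$-derivative is zero. This handles $X=e_i$ directly. For $X=f_i$ and $X=h\in\mathfrak h$, the form $\omega_{GKLO}$ is \emph{not} invariant — by Proposition~\ref{p:transformations}, $\kappa_g^*\omega_{GKLO}=\left<gu\cdot v^-_\rho,v^-_\rho\right>^{-2}\omega_{GKLO}$, which for $g=\exp(sh)$ gives a factor $e^{-2s\rho(h)}$ — so the third term does \emph{not} vanish; instead one shows directly that it exactly cancels against a discrepancy in the first two terms. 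The cleanest way is to organise the argument so that one only ever needs invariance: I would instead prove the generators' case using the fact (standard for such pairings) that it suffices to verify part (2) for $X$ ranging over $\mathfrak u_+$ together with the observation that $V_+$ is generated over $\mathcal U(\mathfrak g)$ by the $\mathfrak u_+$-Whittaker vector $\psi_+$ and $V_-$ by the $\mathfrak u_-$-Whittaker vector $\psi_-$, reducing the general $X$-invariance to invariance under $\mathfrak u_+$ plus a direct check on the generating vectors; but more robustly, I expect the author simply does the honest Stokes computation for each of $e_i$, $f_i$, $h$ separately, using the explicit transformation formulas of Remark~\ref{r:transformations}(2) and the explicit formulas for $e_i\cdot,\ f_i\cdot$ acting on functions, and checks the cancellation term by term. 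The main obstacle is bookkeeping the amplitude factors: one must track how $\kappa_{\exp(sX)}^*$ acts simultaneously on $\phi$, on $\psi$, and on $\omega_{GKLO}$, using the factorization $\exp(sX)\,u=u_{(s)}b_{(s)}$ into $U_+\times B_-$ parts (as in Lemma~\ref{l:yi} for $X=f_i$), and verify that the $\rho(b_{(s)})$-type factors coming from the three pieces combine to give precisely zero derivative at $s=0$. Convergence and the vanishing of boundary terms in Stokes are guaranteed by the decay conditions of Section~\ref{s:gencontours}, so no analytic subtlety arises beyond what is already in place.
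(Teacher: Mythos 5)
Your overall mechanism---Stokes' theorem plus the transformation law of $\omega_{GKLO}$ compensating the $\rho$-twists---is the right one, but the proposal has a genuine gap at exactly the point where the paper does its work. The paper does not argue generator by generator: it proves the single identity \eqref{e:twist},
\begin{equation*}
(\exp(sX)\cdot \phi)\,(\exp(sX)\cdot \psi)\ \omega_{GKLO}\;=\;\kappa^*_{\exp(-sX)}\bigl(\phi\,\psi\,\omega_{GKLO}\bigr),
\end{equation*}
for \emph{every} $X\in\mathfrak g$ at once, by writing $\exp(-sX)x=x_{(s)}b_-$ with $x_{(s)}\in U_+$, $b_-\in B_-$: the two $M_{-\rho}$-actions contribute $\rho(b_-)^2$, while by Proposition~\ref{p:transformations} the pullback $\kappa^*_{\exp(-sX)}\omega_{GKLO}$ contributes $\langle x_{(s)}b_-\cdot v^-_\rho,v^-_\rho\rangle^{-2}=\rho(b_-)^{2}$ (since $x_{(s)}\in U_+$), i.e.\ the same factor; hence $\phi\,\psi\,\omega_{GKLO}$ pulls back as an honest top-form under all of $G$, and $\mathcal L_X=d\,\iota_X$ plus Stokes gives (2) uniformly. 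Your proposal carries this out only for $X\in\mathfrak u_+$, where the $B_-$-correction is trivial and $\omega_{GKLO}$ is invariant; for $X\in\mathfrak h$ and $X=f_i$ you assert that the nonzero derivative of $\kappa^*_{\exp(sX)}\omega_{GKLO}$ ``exactly cancels against a discrepancy in the first two terms'' but never produce that cancellation, and the cancellation \emph{is} the substance of the proof. Moreover your product-rule setup identifies $\tfrac{d}{ds}\big|_{s=0}\kappa_{g_s}^*\phi$ with $(-X)\cdot\phi$ ``by definition of the module structure,'' which is false outside $\mathfrak u_+$: the module is $M_{-\rho}$ (sections of $L_{-\rho}$), so the action on the function $\phi$ carries an extra $\rho(b_{(s)})$ factor; once this is corrected, your three-term split collapses into the identity above, which still has to be proved.

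Two further points. The fallback reduction you suggest---verify (2) only for $X\in\mathfrak u_+$ and then use that $\psi_\pm$ generate $V_\pm$---cannot work: the set of $X$ satisfying (2) is a Lie subalgebra (your own bracket argument), and $\mathfrak u_+$ is already closed under brackets, so $\mathfrak u_+$-invariance does not propagate to $\mathfrak g$; nor does checking the identity on the generating vectors alone extend to all of $V_-\times V_+$ without already knowing invariance under the elements one moves across the pairing. Finally, for non-compact $\Gamma_0$ the step ``decay kills the boundary term'' is not quite sufficient as stated: the paper passes to the closure of $\Gamma_0$ in a suitable compactification and invokes the limit Stokes formula of Hien--Roucairol, and your argument needs the same input.
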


\begin{proof}
The pairing $\left<\ ,\ \right>_{[\Gamma_0]}$ is well-defined for the same reason as
$\Psi_-^{[\Gamma_0]}$, see Remark~\ref{r:convergence}. 
Let us now prove (2). We transfer the integrals from $Z_0$ to $\mathcal R_{1,w_0}$ via $\beta_0$,
but keeping the notation the same. For a 
function $f$ on $\mathcal R_{1,w_0}$ identified with a function on $x\in U_+\cap B_-\dot w_0 B_-$, 
we denote by $\tilde f$ the corresponding element of $M_{-\rho}$ given by 
$$
\tilde f(x b_-)=f(x)\rho(b_-), \quad {\text{for $b_-\in B_-$,}}
$$
and by $\bar f$ the usual function 
$$
\bar f(x b_-)=f(x), \quad {\text{for $b_-\in B_-$}}
$$
on $(U_+\cap B_-\dot w_0 B_-)B_-$ given by $f$.

The proposition relies on the observation that 
$\omega_{GKLO}$
`compensates' for the weight $-\rho$ 
twists coming from the representation on $M_{-\rho}$. Namely, on the 
level of $N$-forms we claim that
\begin{equation}\label{e:twist}
 (\exp(sX)\cdot \phi ) (\exp(sX)\cdot \psi )\ \omega_{GKLO}=\kappa^*_{\exp(-sX)}( \phi\ \psi\ \omega_{GKLO}),
\end{equation}
where on the left hand side we have the local action of $\exp(sX)$ on 
sections of $L_{-\rho}$, and on the right hand side we have the pull-back of forms, as
in Proposition~\ref{p:transformations}.

To prove \eqref{e:twist},  write 
$\exp(-sX) x=x_{(s)} b_-$ for $x_{(s)}\in U_+$ and $b_-\in B_-$. Then 
$$
(\exp(sX)\cdot \phi ) (\exp(sX)\cdot \psi) = \tilde\phi(x_{(s)}b_-) \tilde\psi(x_{(s)}b_-)=
 \phi(x_{(s)}) \psi(x_{(s)}) \rho(b_-)^2.
$$
On the other hand we have 
\begin{multline*}
\kappa^*_{\exp(-sX)}( \phi\ \psi\ \omega_{GKLO})(x)=\bar \phi(\exp(-sX)x)\bar\psi(\exp(-sX)x)\kappa^*_{\exp(-sX)}\omega_{GKLO}
\\=
\bar \phi(\exp(-sX)x)\bar\psi(\exp(-sX)x)\frac 1{\left<\exp(-sX)x\cdot v_{-\rho},v_{-\rho}\right>^2}\omega_{GKLO}
\\=\phi(x_{(s)})\psi(x_{(s)})\frac 1{\left< x_{(s)}b_-\cdot v_{-\rho},v_{-\rho}\right>^2}\omega_{GKLO}=
\phi(x_{(s)})\psi(x_{(s)}){\rho(b_-)^2}\omega_{GKLO},
\end{multline*}
using Proposition~\ref{p:transformations} and that $x_{(s)}\in U_+$.
This completes the
proof of the identity~\eqref{e:twist}.

Taking the derivative of~\eqref{e:twist} we therefore have
\begin{multline*}\label{e:der}
(\, (X\cdot \phi )\  \psi +  \phi \ (X\cdot \psi )\, )\ \omega_{GKLO} =
\left.\frac{d}{ds}\right|_{s=0} (\exp(sX)\cdot \phi ) (\exp(sX)\cdot \psi )\ \omega_{GKLO}\\
=\left.\frac{d}{ds}\right|_{s=0} \kappa^*_{\exp(-sX)}( \phi\ \psi\ \omega_{GKLO})=
d\circ i_{-X}( \phi\ \psi\ \omega_{GKLO})
\end{multline*}
where in the last expression $X$ is understood as the vector field defined by the action of $X$. 

The corresponding integral
$$
\int_{[\Gamma_0]} ((X\cdot \phi )\  \psi \ + \phi \ (X\cdot \psi ) )\ \omega_{GKLO}=
\int_{[\Gamma_0]} d i_{-X}( \phi\ \psi\ \omega_{GKLO})
$$
clearly vanishes if $\Gamma_0$ is compact, by the usual Stokes' theorem. In general, 
one can see that the right hand side vanishes using the arguments of Hien and Roucairol
\cite{HienRoucairol:GaussManin}, by replacing $\Gamma_0$ 
by its closure in a suitable compactification of $Z_0$
and applying the the `limit Stokes formula' \cite[Section~2.1]{HienRoucairol:GaussManin}.
The identity in (2) follows.
\end{proof}

\begin{proof}[Proof of Proposition~\ref{t:Whittaker}]
For any $\psi\in V_+$ we have
 \begin{multline*}
f_i\cdot \Psi^{[\Gamma_0]}_-(\psi)=\left<\psi_-, -f_i\cdot\psi\right>_{[\Gamma_0]}
=\left<f_i\cdot \psi_-, \psi\right>_{[\Gamma_0]}=
\frac{1}z \left<\psi_-, \psi\right>_{[\Gamma_0]}=\frac 1 z \Psi^{[\Gamma_0]}_-(\psi),
\end{multline*}
by Proposition~\ref{c:invariance} and then Lemma~\ref{p:Whittaker}.
\end{proof}

\begin{proof}[Proof of Theorem~\ref{t:main}]
For $g\in G$ consider the `matrix coefficient' 
\begin{equation}\label{e:matrixcoeff}
g\mapsto \Psi^{\Gamma}_-(g\inv\cdot \psi_+).
\end{equation}
Strictly speaking only $\mathcal U(\mathfrak g)$ acts on $V_+$, so the above definition doesn't make 
sense for general $g\in G$. However, for  $[\Gamma_0]$ a member of a 
family of cycles $[\Gamma_h]_{h\in\mathcal O}$ as in Section~\ref{s:gencontours}, 
 this particular matrix coefficient turns out to be 
 well defined on the corresponding $X_{\mathcal O}$ 
 (compare Section~\ref{s:Whittaker}), giving
\begin{eqnarray*}
X_{\mathcal O}:=U_+TU_-\underset{T}\x \mathcal O & \to & \C\\
(u_+e^h u_-,h)&\mapsto &  \Psi^{\Gamma}_-(u_-\inv e^{-h} u_+\inv\cdot \psi_+)=
e^{\chi_-(u_-)}\Psi^{\Gamma}_-(e^{-h}\cdot \psi_+) e^{\chi_+(u_+)},
\end{eqnarray*}
where $\chi^{(i)}_+=-\frac 1 z$, $\chi_-^{(i)}=\frac 1z$, for all $i$, and 
we compute $\Psi^{\Gamma}_-(e^{-h}\cdot \psi_+)$ below. 
In this way \eqref{e:matrixcoeff} gives rise to a Whittaker function, which we denote
by $\mathcal W_\Gamma$.

Since $V_+$, and with it $V_+^*$, have zero infinitesimal character, $\mathcal W_{\Gamma}$  is  annihilated by the Casimir generators in $\mathcal Z(\mathfrak g)$.
By Section~\ref{s:Whittaker}~(2), therefore,
$$
e^{-\rho}\ \mathcal W_\Gamma |_{\mathcal O}\quad : \quad h\mapsto e^{-\rho(h)}\ \Psi_-^{\Gamma}(e^{-h}\cdot \psi_+)
$$
is a solution to the 
quantum Toda lattice \eqref{e:QTodaHamiltonian}.

To compute this solution, suppose $h\in \mathcal O$. Then
\begin{multline}\label{p:intidentity}
e^{-\rho(h)}\  \Psi^{\Gamma}_-(e^{-h}\cdot \psi_+)=e^{-\rho(h)} \int_{[\Gamma_0]} 
e^{\bar\chi(\bar u)} (e^{-h}\cdot \psi_+)(u\bar u\inv)\omega_0\\
=e^{-\rho(h)} \int_{[\Gamma_0]} e^{\bar\chi(\bar u)} (e^{-h}\cdot\tilde{ \psi}_+)(u\inv)\omega_0
=e^{-\rho(h)} \int_{[\Gamma_0]} e^{\bar\chi(\bar u)} \ \widetilde{ \psi}_+(e^h u\inv)\omega_0\\
=
e^{-\rho(h)} \int_{[\Gamma_0]} e^{\bar\chi(\bar u)} \ \widetilde{ \psi}_+ (e^h u\inv e^{-h})\rho(e^h)\omega_0
=\int_{[\Gamma_0]} e^{\bar\chi(\bar u)}\ \widetilde{ \psi}_+(e^h u\inv e^{-h})\omega_0\\
=\int_{[\Gamma_0]} e^{\bar\chi(\bar u)}\ \psi_+(e^h u e^{-h})\omega_0=
\int_{[\Gamma_0]} e^{\bar\chi(\bar u)}\ e^{\chi (e^h u e^{-h})}\omega_0.
\end{multline}
Now we can use Proposition~\ref{p:pairing} (2) to rewrite the integral
$$
\int_{[\Gamma_0]} e^{\bar\chi(\bar u)}\ e^{\chi (e^h u e^{-h})}\omega_0=
\int_{[\Gamma_0]} e^{\chi (u)}\ e^{\bar\chi( e^{-h}\bar u e^{h})}\omega_0,
$$
to get precisely 
\begin{multline}
e^{-\rho(h)}\  \Psi_-(e^{-h}\cdot \psi_+)= 
\int_{[\Gamma_h]} e^{\chi(u)+\bar\chi(\bar u) }\ \omega_h = 
\int_{[\Gamma_h]} e^{\frac 1 z(\sum e_i^*(u)+\sum f_i^*(\bar u)) }\ \omega_h,
\end{multline}
where $[\Gamma_h]$ is the translate of $[\Gamma_0]$, as in Section~\ref{s:gencontours}.
This completes the proof of the theorem.
\end{proof}

\section{Total positivity and critical points of $\mathcal F_h$}\label{s:pos}

Let us fix $z>0$. By the
mirror symmetric construction of the quantum cohomology
ring of $G^\vee/B^\vee$ proved in \cite{Rie:MSgen},
the critical points of the $\mathcal F_h=\mathcal F_h(\  ; z)$ (for
varying $h$) sweep out the Peterson variety 
$$
Y_B^*=Y\underset{G/B}\x\mathcal R_{1,w_0},
$$
with $h$ determining the values of the quantum
parameters. Explicitly,
$g\in Z_h$ is a critical point of $\mathcal F_h$ 
precisely if $g\cdot B_-\in Y_{B}^*$ with $q_i(g\cdot B_-)=e^{\alpha_i(h)}$. 
On the other hand by \cite{Kos:QCoh}  the quantum cohomology ring of 
the full flag variety $G^\vee/B^\vee$ is semisimple
for generic choice of quantum 
parameters. This implies
that  
$\mathcal F_h$ has precisely $\dim H^*(G^\vee/B^\vee)=|W|$
 critical points, all non-degenerate, for generic $h$.

Following Givental \cite{Giv:MSFlag}, the critical points are directly 
related to 
integration cycles. Namely to any non-degenerate
critical  point $p$ in $Z_h$, Givental associates a 
 `descending gradient cycle' for $\operatorname{Re}(\mathcal F)$.
In this way, one may obtain $|W|$
cycles in a generic mirror fiber $Z_h$, which 
should provide a basis of 
solutions to the quantum Toda lattice.

\begin{defn}\label{d:totneg}
Consider the isomorphism $\delta_h:Z_h\to \mathcal R_{1,w_0}$ given by $g\mapsto g\cdot B_-$. Let 
$Z_h^{>0}:=\delta_h\inv(\mathcal R_{1,w_0}^{>0})$.  
\end{defn}

\begin{lem}\label{l:pos}
Suppose $u\bar u\inv\in Z_0^{>0}$. Then  
$u\in U_+^{>0}$ and $\bar u\in U_-^{>0}$.   
\end{lem}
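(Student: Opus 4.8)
The plan is to prove the two conclusions separately: the statement about $u$ is essentially formal, while the statement about $\bar u$ is the substantive point. For $u$, I unwind Definition~\ref{d:totneg}: the hypothesis $u\bar u\inv\in Z_0^{>0}$ says $\delta_0(u\bar u\inv)=(u\bar u\inv)\cdot B_-\in\mathcal R_{1,w_0}^{>0}$, and since $\bar u\inv\in U_-\subset B_-$ this flag is just $u\cdot B_-$. Hence $u\cdot B_-\in\mathcal R_{1,w_0}^{>0}=U_+^{>0}\cdot B_-$, so $u\cdot B_-=v\cdot B_-$ for some $v\in U_+^{>0}$, and since the stabilizer of $B_-$ in $U_+$ is trivial, $u=v\in U_+^{>0}$.

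For $\bar u$ I would first obtain an intrinsic description of it. Because $Z_0=U_+U_-\cap B_-\dot w_0$, the condition $u\bar u\inv\in Z_0$ amounts to $u\bar u\inv=b\dot w_0$ for some $b\in B_-$, i.e.\ $u=b\dot w_0\bar u$. Left-multiplying by $\dot w_0\inv$ and using $\dot w_0\inv b\dot w_0\in\dot w_0\inv B_-\dot w_0=B_+$ gives $\dot w_0\inv u=(\dot w_0\inv b\dot w_0)\,\bar u$; since $u\in U_+^{>0}\subset B_-\dot w_0B_-$ the left-hand side lies in $\dot w_0\inv B_-\dot w_0B_-=B_+B_-=U_+TU_-$, and this factorization then identifies $\bar u$ with the $U_-$-component of the Gauss decomposition of $\dot w_0\inv u$ (necessarily independent of the chosen representative $\dot w_0$, as $\bar u$ depends only on the coset $B_-\dot w_0$). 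In particular the assignment $u\mapsto\bar u$ is precisely the biregular isomorphism $U_+\cap B_-\dot w_0B_-\overset\sim\To U_-\cap B_+\dot w_0B_+$ of ``chamber ansatz/twist'' type studied in \cite{MarRie:ansatz} (in the spirit of Berenstein--Fomin--Zelevinsky): in reduced-word coordinates on source and target it is given by subtraction-free rational expressions. Writing $u=x_{i_1}(a_1)\cdots x_{i_N}(a_N)$ with $a_j>0$, those expressions then take positive values, so $\bar u\in U_-^{>0}$. (One could also finish by a connectedness argument: $Z_0^{>0}\cong(\R_{>0})^N$ is connected and $u\bar u\inv\mapsto\bar u$ is continuous into the real locus of $U_-\cap B_+\dot w_0B_+$, of which $U_-^{>0}$ is a connected component by Lusztig, so a single totally positive example suffices.)

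I expect the only real obstacle to be this last step --- matching $u\mapsto\bar u$ with a map already known to be subtraction-free, together with the minor bookkeeping about the choice of $\dot w_0$ --- since the remaining manipulations are routine juggling of the defining relations of $Z_0$ and $\mathcal R_{1,w_0}$.
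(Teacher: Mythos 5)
Your proposal is correct in substance, but it takes a genuinely different route from the paper, and its key step is delegated to a citation that does not quite carry it. The paper's own proof is two lines: the definition of $Z_0$ gives $u\inv\cdot B_-=\bar u\inv\cdot B_+$, and since $u\inv$ is totally positive with respect to the opposite pinning $\bar x_i(t)=\exp(-te_i)$, $\bar y_i(t)=\exp(-tf_i)$, Lusztig's identity $U_+^{>0}\cdot B_-=U_-^{>0}\cdot B_+$ from \cite{Lus:TotPos94}, applied to that pinning, identifies $\bar u\inv$ with the (opposite-pinning) totally positive element of $U_-$ representing this flag, i.e. $\bar u\in U_-^{>0}$. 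Your treatment of $u$ coincides with the paper's (``clear from the definitions'', which you spell out correctly). For $\bar u$ you instead realize it as the $U_-$-factor of the Gauss decomposition of $\dot w_0\inv u$ --- that identification, including independence of the representative $\dot w_0$, is correct --- and then appeal to subtraction-freeness of this twist map in reduced-word coordinates. That positivity statement is true, but it is precisely the nontrivial content of the lemma: it is a chamber-ansatz/twist theorem of Berenstein--Zelevinsky/Fomin--Zelevinsky type for the unipotent cell $U_+\cap B_-\dot w_0 B_-$ (equivalently, an immediate consequence of the Lusztig identity above), and it is not obviously contained in \cite{MarRie:ansatz}, which concerns parametrizations of Deodhar strata. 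So either supply the correct reference for the subtraction-free twist, or simply run the short argument via \cite{Lus:TotPos94} as the paper does. Your connectedness fallback has analogous soft spots: it needs that $U_-^{>0}$ is a connected component of the real points of $U_-\cap B_+\dot w_0 B_+$ (again not stated in \cite{Lus:TotPos94} in this form), plus at least one point of $Z_0^{>0}$ whose $\bar u$ is verified to be totally positive, which you only produce for $SL_2$. What your route would buy, once properly referenced, is an explicit rational formula for $\bar u$ in terms of $u$; the paper's route avoids coordinates entirely.
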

\begin{proof}
It is clear from the definitions that $u\in U_+^{>0}$. The rest of the 
lemma follows from $u\inv\cdot B_-=\bar u\inv\cdot B_+$,  together
with Lusztig's result \cite{Lus:TotPos94} that
$$
U_+^{>0}\cdot B_-= U_-^{>0}\cdot B_+,
$$
 applied to the 
opposite pinning, where $\bar x_i(t)=\exp(-te_i)$ 
and $\bar y_i(t)=\exp(-tf_i)$.
\end{proof}

\begin{prop}\label{p:poscrit}
For any $h\in\mathfrak h_\R$ and $M\in \R_{>0}$, consider the set 
$$
\mathcal M_{h,M}:=\left \{ g\in Z_h^{>0} \ |\  \mathcal F(g;z)\le \frac 1 z M\right \}.
$$
If $M$ is sufficiently large then $\mathcal M_{h,M}$ is a nonempty,
compact subset of $Z_h^{>0}$.  In particular the restriction of $\mathcal F_h$ 
to $Z_h^{>0}$ attains a minimum.
\end{prop}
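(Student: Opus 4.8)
The plan is to work on the totally positive part $\mathcal R_{1,w_0}^{>0}$ directly, using the coordinate parameterization $\R_{>0}^N \overset{\sim}{\to} \mathcal R_{1,w_0}^{>0}$ given by $(a_1,\dots,a_N) \mapsto x_{i_1}(a_1)\cdots x_{i_N}(a_N)\cdot B_-$ for a fixed reduced expression $\mathbf i$ of $w_0$, and transporting everything to $Z_h^{>0}$ via $\delta_h\inv$ composed with the translation of Section~\ref{s:gencontours}. Under this parameterization the set $\mathcal M_{h,M}$ becomes a closed subset of $\R_{>0}^N$, and nonemptiness is easy: for instance, by Lemma~\ref{l:pos} and the formula for $\mathcal F_h$ from Section~\ref{s:gencontours}, $\mathcal F_h$ on $Z_h^{>0}$ equals $\frac 1z(\sum_i e_i^*(u) + \sum_i e^{\alpha_i(h)} f_i^*(\bar u))$, which is a sum of $2n$ subtraction-free rational functions of the positive coordinates $a_j$; evaluating near a small positive point shows this can be made $\le \frac 1z M$ for $M$ large, so $\mathcal M_{h,M}\neq\emptyset$.

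The real content is compactness. In the $(a_1,\dots,a_N)$-coordinates I need to show that the sublevel set $\{\mathcal F_h \le \frac 1z M\}$ is bounded away from the boundary of $\R_{>0}^N$ — both away from $a_j\to 0$ and away from $a_j\to +\infty$ for each $j$ — and bounded in norm. The key structural input is that $e_i^*(u)$ and $f_i^*(\bar u)$, written in the $a_j$, are \emph{polynomials with positive coefficients} in the variables, and more precisely that the full collection of them forms a system of ``positive coordinates'' in Lusztig's sense: a subtraction-free invertible change of coordinates from $\R_{>0}^N$ to a semialgebraic piece of the parameter space $\mathcal A^*$. The first step would be to show $e_i^*(u) = a_{l_1}+\dots+a_{l_m}$ where the $l_k$ index the occurrences of $i$ in $\mathbf i$ (this already appeared in the proof of Proposition~\ref{p:transformations}), and a parallel (monomial-transform) expression for the $f_i^*(\bar u)$ obtained from $\bar u\inv \in U_-^{>0}$. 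From these one extracts that $\mathcal F_h \le \frac 1z M$ forces each $e_i^*(u) \le M$ and each $f_i^*(\bar u) \le e^{-\alpha_i(h)} M$, since all summands are positive.

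From the bound $e_i^*(u)\le M$ for all $i$ one gets an a priori upper bound on \emph{all} the $a_j$: indeed, running over all $i\in I$, the sums $e_i^*(u)$ together involve every coordinate $a_j$ (each $j$ belongs to exactly one residue class $i$), so $a_j \le M$ for every $j$. This handles the $a_j\to+\infty$ directions. For the $a_j\to 0$ directions I would use the $\bar u$ side: expressing the $f_i^*(\bar u)$ in terms of the $a_j$, the boundary stratum $a_j\to 0$ (for the open Deodhar cell $\mathcal R_{\mathbf i}$) corresponds to leaving $\mathcal R_{1,w_0}$, i.e. to $u\inv\cdot B_-$ degenerating so that $\left<u\inv\cdot v_\rho^-, v_\rho^+\right>\to 0$ or some $f_i^*(\bar u)\to\infty$; concretely one shows that as any $a_j\to 0$ (with the others bounded), at least one $f_i^*(\bar u)$ blows up, contradicting $f_i^*(\bar u)\le e^{-\alpha_i(h)} M$. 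This uses the explicit transition between the two factorizations of an element of $Z_0$, essentially the content behind $\beta_0$ and the $\left<\ \cdot\ ,v_\rho^+\right>$ denominators. Together with the upper bounds, this confines $\mathcal M_{h,M}$ to a compact box $[\eps, M]^N\subset\R_{>0}^N$, and since $\mathcal M_{h,M}$ is closed in $Z_h^{>0}$ it is compact. Continuity of $\mathcal F_h$ then gives the minimum on the nonempty compact set $\mathcal M_{h,M}$, which is a fortiori the minimum on all of $Z_h^{>0}$ once $M$ exceeds the value of $\mathcal F_h$ at any chosen point.

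\textbf{Main obstacle.} The delicate step is the $a_j\to 0$ analysis: showing that approaching \emph{any} facet of $\R_{>0}^N$ forces some $f_i^*(\bar u)\to+\infty$. Unlike the $e_i^*(u)$, which are transparent linear functions of the $a_j$, the $f_i^*(\bar u)$ are more complicated rational functions coming from re-expanding $u\inv\bar u$ via the opposite big cell, and one must rule out cancellation/boundedness along every boundary direction uniformly. I would approach this either by induction on $N$ peeling off $x_{i_1}(a_1)$ (using the recursive structure of the $\mathcal R_{\mathbf i}$ stratification), or by invoking the already-available machinery: $\mathcal R_{1,w_0}^{>0}$ is, via $\delta_h$, exactly the locus where $\omega$ is regular and positive, and the divisor at infinity is anticanonical, so properness of $\mathcal F_h$ on $Z_h^{>0}$ is equivalent to $\operatorname{Re}(\mathcal F)\to+\infty$ at every boundary point of the closure of $\mathcal R_{1,w_0}^{>0}$ — a statement that can be checked stratum by stratum on the (compact) totally nonnegative flag variety $\overline{\mathcal B^{>0}}$, using that each $e_i^*, f_i^*$ is a nonnegative function there that does not extend to the relevant boundary stratum.
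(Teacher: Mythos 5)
Your reduction to a properness statement in the $a$-coordinates, and the upper bound $a_j\le M$ extracted from $e_i^*(u)\le M$, are fine and run parallel to the paper's argument. But the step you yourself flag as the main obstacle --- that approaching any facet $a_j\to 0$ of $\R_{>0}^N$ forces some $f_i^*(\bar u)\to+\infty$ --- is where the whole content of the proposition sits, and neither of your suggested routes closes it as stated. Non-extendability of the nonnegative functions $e_i^*,f_i^*$ to a boundary stratum of the compactification does not imply that they tend to $+\infty$ along every sequence (a subtraction-free rational function can remain bounded while failing to extend, e.g. along sequences approaching an indeterminacy locus), and the proposed induction peeling off $x_{i_1}(a_1)$ is not carried out. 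So as written there is a genuine gap exactly at the crucial point.

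The missing idea --- and it is how the paper argues --- is to use positive factorization coordinates on the $\bar u$ side as well, instead of trying to control the $f_i^*(\bar u)$ as rational functions of the $a_j$. By Lemma~\ref{l:pos} one has $\bar u\in U_-^{>0}$, so $\bar u=y_{i_1}(b_1)\cdots y_{i_N}(b_N)$ with all $b_j>0$ and $f_i^*(\bar u)=\sum_{j:i_j=i}b_j$; hence your bound $f_i^*(\bar u)\le e^{-\alpha_i(h)}M$ bounds every $b_j$, confining $\bar u$ to a compact subset of $U_-$. Now argue sequentially: if points of the sublevel set had $a_j\to 0$ for some $j$, pass to a subsequence with $u_{(s)}\to u_0$ and $\bar u_{(s)}\to\bar u_0\in U_-$. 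Since $u_0$ is a totally nonnegative product with a missing factor, it lies in a cell $U_w^{>0}$ with $\ell(w)<N$, so $\lim u_{(s)}\inv\cdot B_-$ lies in a Bruhat cell $B_-\dot w'\cdot B_-$ with $w'\ne w_0$; on the other hand $u_{(s)}\inv\cdot B_-=e^h\bar u_{(s)}\inv\cdot B_+$ converges to $e^h\bar u_0\inv\cdot B_+\in B_-\cdot B_+=B_-\dot w_0\cdot B_-$, a contradiction. This gives the uniform lower bound $a_j\ge m>0$ on the sublevel set, hence compactness; your remaining points (nonemptiness for large $M$, closedness, and passing to the minimum) are fine. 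Incidentally, your aside that the $2n$ functions $e_i^*(u),f_i^*(\bar u)$ form a positive coordinate system on an $N$-dimensional space is not needed and not correct as phrased; what is actually used is only their positivity and the two factorizations above.
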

Note that by definition $\mathcal M_{h,M}$ is independent of the positive scalar
$z$.
\begin{proof}
If $ue^h\bar u\inv\in \mathcal M_{h,M}$, then by Lemma~\ref{l:pos} 
we have $u\in U_+^{>0}$ and $\bar u\in U_-^{>0}$. So we can 
fix a reduced expression $\mathbf i$ of $w_0$ and write
\begin{eqnarray*}
u&=&x_{i_1}(a_1)x_{i_2}(a_2)\cdots x_{i_N}(a_N),\\
\bar u&=&y_{i_1}(b_1)y_{i_2}(b_2)\cdots y_{i_N}(b_N),
\end{eqnarray*}
for positive $a_i,b_i$ with $i\in I$. Using the $a_i$ and $b_i$
as coordinates for $u$ and $\bar u$, respectively, we
may define 
$$
\mathcal N_{h,M}:=\{ u e^h\bar u\inv\in Z_h^{>0} \ |\  
a_i\le M, \ b_i\le M \quad\forall i\in I\,  \}.
$$
 Since $\mathcal F(u e^h \bar u\inv;z)=\frac {1}z\left(\sum a_i+\sum b_i\right)$, 
it is clear that $\mathcal M_{h,M}$ is a closed subset of $ \mathcal N_{h,M}$.
It suffices therefore to show the following claim.
\vskip .3cm
\noindent {\it Claim:} There exists an $m<M$ such that 
$\mathcal N_{h,M}$ is a subset of the compact set
$$
\mathcal N^m_{h,M}:=\{ u e^h\bar u\inv\in Z_h^{>0} \ |\  
m\le a_i\le M  \quad\forall i\in I\,  \}\cong [m,M]^N.
$$
\vskip .3cm

Suppose indirectly that we have an index $i$ and a sequence 
$u_{(s)} e^h{\bar u_{(s)}}\inv$ in $\mathcal N_{h,M} $ 
for which the coordinate $a_{i}\to 0$
as $s\to\infty$. We may assume that $u_{(s)}\inv \cdot B_-$
converges, passing to a subsequence if necessary. Then it follows that 
$$ 
\lim_{s\to\infty} (u_{(s)}\inv \cdot B_-)\in B_-\dot w\cdot B_-
$$
for some $w< w_0$. On the other hand using $ u_{(s)}\inv \cdot B_-=e^h\bar u_{(s)}\inv \cdot B_+$
we see that
$$
\lim_{s\to\infty} (u_{(s)}\inv \cdot B_-)=e^h\lim_{s\to\infty} (\bar u_{(s)}\inv \cdot B_+)\in B_-\cdot B_+=B_-\dot w_0\cdot B_-.
$$
Namely, this last limit cannot leave the big cell $B_-\cdot B_+$  because the $b_i$ coordinates of the $\bar u_{(s)}$ are bounded from above by $M$. 

Therefore we have arrived at a contradiction and the
Claim is proved.
\end{proof}

\begin{cor}
For every $h\in\mathfrak h_\R$ the function $\mathcal F_h$ has a totally positive
critical point.
\end{cor}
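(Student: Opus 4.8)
The plan is to derive this directly from Proposition~\ref{p:poscrit} together with one short complex-analytic remark. By Proposition~\ref{p:poscrit} the restriction of $\mathcal F_h$ to $Z_h^{>0}$ attains a minimum; fix a point $p\in Z_h^{>0}$ where it is attained. Since $Z_h^{>0}=\delta_h\inv(\mathcal R_{1,w_0}^{>0})$ and $\mathcal R_{1,w_0}^{>0}$ is parametrised, for any reduced expression $\mathbf i$ of $w_0$, by $(a_1,\dots,a_N)\in\R_{>0}^N$ via $x_{i_1}(a_1)\cdots x_{i_N}(a_N)\cdot B_-$, the set $Z_h^{>0}$ is a real-analytic $N$-manifold without boundary carrying real coordinates $a_1,\dots,a_N$. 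These coordinates are the restriction to $Z_h^{>0}$ of holomorphic coordinates on the open subset $\delta_h\inv(\mathcal R_{\mathbf i})$ of $Z_h$ (recall $\mathcal R_{1,w_0}^{>0}\subseteq\mathcal R_{\mathbf i}$), on which $\mathcal F_h$, being a regular function on $Z_h$, is holomorphic. As $p$ is an interior point of $\R_{>0}^N$ and $\mathcal F_h$ is real-valued on $Z_h^{>0}$ (see below), all the first-order partial derivatives $\partial_{a_j}\big(\mathcal F_h|_{Z_h^{>0}}\big)(p)$ vanish.

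The remaining point is to promote this to the vanishing of the holomorphic differential $d\mathcal F_h$ at $p$. By Lemma~\ref{l:pos}, applied (as in the proof of Proposition~\ref{p:poscrit}) with $h\in\mathfrak h_\R$ arbitrary, every element of $Z_h^{>0}$ has the form $ue^h\bar u\inv$ with $u\in U_+^{>0}$ and $\bar u\in U_-^{>0}$, so $\mathcal F_h$ takes there the value $\tfrac1z\big(\sum_i e_i^*(u)+\sum_i f_i^*(\bar u)\big)$, a sum of positive real numbers; in particular $\mathcal F_h$ is real-valued on $Z_h^{>0}$. Now for a holomorphic function which is real on the totally real slice $\{a_j\in\R_{>0}\}$, its complex derivative $\partial\mathcal F_h/\partial a_j$ restricted to that slice coincides with the ordinary real partial derivative $\partial_{a_j}$ of the restricted real-valued function (the complex difference quotient has a limit, which may therefore be computed along the real axis). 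Hence the relations from the previous paragraph give $\partial\mathcal F_h/\partial a_j(p)=0$ for all $j$, i.e.\ $d\mathcal F_h(p)=0$. Thus $p$ is a critical point of $\mathcal F_h$ lying in $Z_h^{>0}$, which is precisely a totally positive critical point.

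I do not expect any genuine obstacle here: the essential work is already contained in Proposition~\ref{p:poscrit}, whose proof confines the minimum to a compact region of $Z_h^{>0}$ bounded away from the walls $\{a_i=0\}$. The only step requiring (slight) care is the passage from a real minimum of $\mathcal F_h|_{Z_h^{>0}}$ to an honest zero of the holomorphic differential $d\mathcal F_h$, and the reality — indeed positivity — of $\mathcal F_h$ on $Z_h^{>0}$ is exactly what licenses it. Note finally that, since only the existence of a totally positive critical point is asserted, nothing need be said about whether $p$ is non-degenerate.
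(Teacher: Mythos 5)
Your proof is correct and takes essentially the same route as the paper, which likewise deduces the corollary from Proposition~\ref{p:poscrit} by observing that the totally positive critical point is provided by the minimum of $\mathcal F_h$ on $Z_h^{>0}$. The only difference is that you spell out the standard step (left implicit in the paper) that an interior minimum on the totally real slice $\R_{>0}^N\subset\mathcal R_{\mathbf i}$ forces the holomorphic differential $d\mathcal F_h$ to vanish, and that step is argued correctly.
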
 

The totally positive critical point is provided by the minimum of $\mathcal F_{h}$ on $Z_h^{>0}$. 
 For type $A$ this result was proved already in \cite{Rie:TotPosGBCKS}, where 
moreover it was shown that the totally positive critical point is unique, and
this was used to describe the totally nonnegative part of the Peterson variety. 
The proof of uniqueness in the general case is joint work in preparation
with Thomas Lam \cite{LamRie:Satake}.

The same proof as above with the negative pinning also gives the following
\begin{cor}
For every $h\in\mathfrak h_\R$ the function $\mathcal F_h$ has a critical point
in $\Gamma_h^{(w_0)}$. 
\end{cor}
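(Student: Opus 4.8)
The plan is to mimic the argument for Proposition~\ref{p:poscrit} and its corollary, but replacing the totally positive pinning by the ``opposite'' pinning $\bar x_i(t)=\exp(-te_i)$, $\bar y_i(t)=\exp(-tf_i)$, exactly as in the proof of Lemma~\ref{l:pos}. First I would recall that $\Gamma_h^{(w_0)}$ is, by the very definition in Section~\ref{s:gammaw0}, the set of $ue^h\bar u\inv\in Z_h$ with $u\inv\in U_+^{>0}$, equivalently $\bar u\inv\in U_-^{>0}$; in terms of the opposite pinning this says precisely that $u$ lies in the totally positive unipotent subgroup for the opposite pinning, and likewise for $\bar u$. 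So $\Gamma_h^{(w_0)}$ plays the role that $Z_h^{>0}$ played before, but with the roles of the two Borels swapped.

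Next I would re-run the compactness argument of Proposition~\ref{p:poscrit} verbatim in this new coordinate system. Writing $u=\bar x_{i_1}(a_1)\cdots\bar x_{i_N}(a_N)$ and $\bar u=\bar y_{i_1}(b_1)\cdots\bar y_{i_N}(b_N)$ with $a_i,b_i>0$ for a fixed reduced expression $\mathbf i$ of $w_0$, the superpotential restricted to $\Gamma_h^{(w_0)}$ is again (up to the harmless sign introduced by the pinning change, which does not affect the relevant estimates) a positive linear combination $\frac1z(\sum a_i+\sum b_i)$ of these coordinates, since $e_i^*$ and $f_i^*$ of the opposite-pinning one-parameter subgroups are $\pm a_i$, $\pm b_i$. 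Hence the sublevel set $\{g\in\Gamma_h^{(w_0)}\mid \mathcal F(g;z)\le \frac1z M\}$ is contained in the box where all $a_i,b_i\le M$, and the identical argument---if some $a_i\to 0$ along a sequence then $u_{(s)}\inv\cdot B_-$ leaves the big cell $B_-\dot w_0\cdot B_-$, contradicting $u_{(s)}\inv\cdot B_-=e^h\bar u_{(s)}\inv\cdot B_+\in B_-\cdot B_+$ (the limit cannot leave $B_-\cdot B_+$ because the $b_i$ are bounded)---shows the sublevel set is in fact contained in a compact box $[m,M]^N$. So $\mathcal F_h$ restricted to $\Gamma_h^{(w_0)}$ attains a minimum, and that minimum is an interior critical point lying in $\Gamma_h^{(w_0)}$.

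The only genuinely new point to check, and the step I expect to need the most care, is that the change of pinning really does legitimize the two-sided totally positive description of $\Gamma_h^{(w_0)}$ and the big-cell trick: one must verify that Lusztig's identity $U_+^{>0}\cdot B_- = U_-^{>0}\cdot B_+$, applied to the opposite pinning, gives the chain of equalities in the definition of $\Gamma_h^{(w_0)}$ and, crucially, that the ``escape from the big cell'' phenomenon is symmetric under swapping $B_+$ and $B_-$. Both of these are already implicit in Lemma~\ref{l:pos} and its proof, which is exactly where the opposite pinning was introduced, so in the write-up I would simply say ``the same proof as that of Proposition~\ref{p:poscrit}, carried out with the negative pinning as in Lemma~\ref{l:pos}, applies'' and note that $\Gamma_h^{(w_0)}$ is the analogue of $Z_h^{>0}$ for that pinning. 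Everything else---nonemptiness of the sublevel set for large $M$, closedness, and the conclusion that the minimum is a critical point---transfers without modification.
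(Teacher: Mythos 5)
Your overall strategy is exactly the paper's: the paper proves this corollary simply by observing that the proof of Proposition~\ref{p:poscrit} goes through with the negative pinning, and your identification of $\Gamma_h^{(w_0)}$ as the opposite-pinning analogue of $Z_h^{>0}$ (via Lemma~\ref{l:pos} and Lusztig's identity) together with the box-plus-escape-from-the-big-cell compactness argument is that proof. However, one step is false as written, and it is precisely the sign you dismiss as harmless. On $\Gamma_h^{(w_0)}$ one has $u\inv\in U_+^{>0}$ and $\bar u\inv\in U_-^{>0}$, so in your coordinates $e_i^*(u)=-\sum_{j:i_j=i}a_j$ and $f_i^*(\bar u)=-\sum_{j:i_j=i}b_j$, hence $\mathcal F_h=-\frac 1z\left(\sum_j a_j+\sum_j b_j\right)$ there, a \emph{negative} sum of the positive coordinates. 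Consequently the sublevel set $\{g\in\Gamma_h^{(w_0)}\mid \mathcal F_h(g)\le\frac 1z M\}$ that you use is all of $\Gamma_h^{(w_0)}$ for $M>0$ and is not compact, and $\mathcal F_h$ does not attain a minimum on $\Gamma_h^{(w_0)}$: its infimum is $-\infty$ as any $a_j\to\infty$. The repair is to run the argument of Proposition~\ref{p:poscrit} for $-\mathcal F_h$, i.e.\ to consider the superlevel set $\{g\in\Gamma_h^{(w_0)}\mid \mathcal F_h(g)\ge-\frac 1z M\}$: the level condition then gives the upper bounds $a_j,b_j\le M$, the escape-from-the-big-cell argument (which does transfer verbatim under swapping the pinning, as you note) gives the lower bounds, and one concludes that $\mathcal F_h$ attains a \emph{maximum} on $\Gamma_h^{(w_0)}$ — as the paper states — which is the desired critical point. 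With that sign corrected, your write-up coincides with the intended proof.
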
 

This critical point is given by a maximum of $\mathcal F_h$ on $\Gamma_h^{(w_0)}$.
We might call it the totally negative critical point, and it is a feature of the 
full flag variety that a symmetry between totally positive and totally 
negative critical points exists, compare \cite{Rie:QCohGr,Rie:TotPosGBCKS}
and Lemma~5.3 in \cite{Rie:MSgen}. 

Assuming the result about uniqueness of the totally positive/negative critical points
in every fiber over $h\in\mathfrak h_\R$, it seems natural to think of 
the family $\Gamma^{(w_0)}$ of integration cycles  as associated to 
the family of totally negative critical points, and 
the family $\Gamma^{(1)}$ of integration cycles  as associated to 
the family of totally positive critical points (in both cases via the construction
used by Givental~\cite{Giv:MSFlag}). 
For $SL_2$ this is exactly
the case, by direct calculation.

\bibliographystyle{amsplain}

\def\cprime{$'$}
\providecommand{\bysame}{\leavevmode\hbox to3em{\hrulefill}\thinspace}
\providecommand{\MR}{\relax\ifhmode\unskip\space\fi MR }
\providecommand{\MRhref}[2]{%
  \href{http://www.ams.org/mathscinet-getitem?mr=#1}{#2}
}
\providecommand{\href}[2]{#2}

\end{document}